\newcommand{\bx}{\mathbf{x}}
\newcommand{\ba}{\mathbf{a}}
\newcommand{\D}{{\mathcal{D}}}
\newtheorem{thm}{Theorem}
\newtheorem{prop}{Proposition}
\newtheorem{defn}{Definition}
\newtheorem{cor}{Corollary}
\newtheorem{rem}{Remark}
\newtheorem{exmp}{Example}
\newtheorem{assum}{Assumption}
\newcommand{\CC}{\mathbb{C}}
\newcommand{\NN}{\mathbb{N}}
\newcommand{\PP}{\mathbb{P}}
\newcommand{\fP}{\mathbb{P}}
\newcommand{\QQ}{\mathbb{Q}}
\newcommand{\ZZ}{\mathbb{Z}}
\def\lra{\longrightarrow}
\def\ti{\tilde}
\def\sO{{\mathcal O}}
\def\sL{{\mathcal L}}
\def\sO{\mathcal{O}}
\def\cH{\mathcal{H}}
\def\sD{\mathcal{D}}
\def\sE{\mathcal{E}}
\def\cM{\mathcal{M}}
\def\sQ{\mathscr{Q}}
\def\sX{\mathcal{X}}
\def\cX{\mathcal{X}}
\def\cZ{\mathcal{Z}}
\def\DF{{\rm DF}}
\def\bs{\mathbf{s}}
\def\ba{\mathbf{a}}
\def\bd{\mathbf{d}}
\def\cD{\mathcal{D}}
\def\fP{\mathscr{P}}
\def\Chow{\mathrm{Chow}}
\begin{document}
\title{Nonexistence of asymptotic GIT compactification}
\author{Xiaowei Wang and Chenyang Xu}
\date{\today}
\maketitle{}
\abstract{
We provide examples  of families of (log) smooth canonically polarized varieties, including smooth weighted pointed curves and smooth hypersurfaces in $\mathbb{P}^3$ with large degree  such that  the Chow semistable limits under distinct pluricanonical embeddings do not  stabilize.} \tableofcontents
\section{Introduction}

We work over the field $\mathbb{C}$ of complex numbers. Ever since the geometric invariant theory (GIT) was reinvented by Mumford, using it to construct the moduli space of polarized varieties $(X,L)$ by taking the geometric invariant theory quotient of the Chow (or Hilbert) semistable points is one of the most important applications. When $L$ is a (positive) power of the canonical bundle $\omega_X$, i.e., if $X$ is a canonically polarized manifold, it was shown by Mumford-Gieseker in dimension 1, Gieseker in dimension 2 and Donaldson-Yau in higher dimension that $(X,\omega_X)$ is asymptotically Chow stable (see \cite{MFK, G,  Yau, Don}), i.e., given a smooth canonically polarized variety $(X,\omega_X)$ we know that there exists an $r_0$ such that $(X,\omega^{\otimes r}_X)$ is Chow stable for any $r\ge r_0$.

By the general GIT theory \cite{MFK}, we indeed get a projective moduli space containing an open locus parameterizing smooth $(X,\omega^{\otimes r}_X)$.  To understand this  compactification, we need to characterize  singular objects that are included in the GIT compactification.  Since by the nature of the GIT construction the members included in this compactification a priori depend on the multiple $r$ of the pluri-canonical embedding.  A fundamental question, asked first by Koll\'ar (cf. \cite[Problem 3.1]{V}), is to see whether or not those  members in the GIT compactification will eventually stabilize, i.e., whether  Chow semistable canonically polarized  varieties  will be the same for sufficiently divisible $r$ like the case of  moduli space of curves.  However, in this note we show this fails in general even for very natural examples.

\begin{defn} A polarized log variety $(X,D;L)$ is called {\it asymptotically log Chow semistable} (resp. weakly asymptotically log Chow stable) if the corresponding Chow points of $(X,D)$ under the embeddings given by $|L^{\otimes k}|$ is semistable for  $k\gg1$ (resp. infinitely many $k\in \mathbb{N}$). (see Section \ref{sheight} for more background).
\end{defn}

When $D=0$, we will just write $(X,L)$ for $(X,D; L)$.

\begin{thm}\label{t-hyper}There exist  projective flat families  $(\sX,\sD; \sL)$ which are families of
\begin{enumerate}
\item[(1)] smooth weighted pointed curve $\pi :( \cX, \Sigma^n_{i=1} a_is_i; \omega_{\cX}(\Sigma^n_{i=1} a_is_i)) \to B^{\circ}$ for any $(a_1,...,a_n)\in \mathbb{Q}^n\cap [0,1]^n$, genus $g\ge 3$ satisfying $\sum_ia_i>\frac{g-1}{2g-4}$,
	\item[(2)] degree $m$ smooth hypersurfaces $(\cX, \sO(1))\to B^{\circ}$ in $ \mathbb{P}^3_{B^{\circ}}$ for any $m>30$, and
\item[(3)] smooth surfaces $(\mathcal{X}, \omega_{\cX})\to B^{\circ}$ whose fibers satisfy $(K^2_{\cX_b},p_g)=(2,0)$,
\end{enumerate}
 over a punctured smooth curve $B^{\circ}=B\setminus \{b_0\}$,
such that for any sufficiently divisible $r$, the Chow semistable limits of $(\mathcal{X}_b, \sD_b; \sL_b^{\otimes r})$   for $b\to b_
0$ are not weakly asymptotically log Chow semistable.
\end{thm}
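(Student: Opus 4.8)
The plan is to prove, for each sufficiently divisible $r$, that the Chow semistable limit
$$
W_r := \lim_{b\to b_0}\bigl(\sX_b,\sD_b;\sL_b^{\otimes r}\bigr)
$$
fails to be weakly asymptotically log Chow semistable, which simultaneously yields the nonstabilization discussed in the introduction: were the $W_r$ eventually independent of $r$, their common value would appear as the Chow semistable limit for all large $r$ and hence would be weakly asymptotically log Chow semistable. The argument splits into one general reduction followed by three explicit finite-level GIT computations, one per family.

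First I would set up the reduction. Weak asymptotic log Chow semistability of a polarized log pair $(X,D;M)$ means semistability of the Chow point of the $|M^{\otimes k}|$-embedding for infinitely many $k$; normalizing the Mumford weight of an arbitrary test configuration and letting $k\to\infty$ along such a subsequence forces nonnegativity of the log Donaldson--Futaki invariant, so it implies log K-semistability. For a canonically polarized log pair, log K-semistability in turn forces the pair to be semi-log-canonical with polarization proportional to $K+D$ (an Odaka-type statement). Hence it suffices to show that for every sufficiently divisible $r$ the object $W_r$, equipped with its limit boundary and limit polarization $M_r$, is \emph{not} semi-log-canonical (equivalently, to exhibit a single destabilizing test configuration of $W_r$). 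This reduces the theorem to a statement about one fixed degenerate object for each $r$.

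The heart of the matter is then identifying $W_r$ by finite-level GIT. For the weighted pointed curves I would arrange the family so that the marked sections collide as $b\to b_0$: the KSBA/Hassett limit for the weight vector $\ba$ sprouts a rational tail carrying the points, whereas the Chow semistable replacement at level $r$ instead retains a configuration that is not semi-log-canonical (a cuspidal or tacnodal attachment, or a collision whose boundary coefficient is illegal). The inequality $\sum_i a_i>\tfrac{g-1}{2g-4}$ is exactly the balance point obtained by comparing the Mumford weights of the tail-sprouting and the point-colliding one-parameter subgroups, guaranteeing that the level-$r$ Chow semistable limit is the non-slc configuration rather than the Hassett one for all sufficiently divisible $r$; the bound $g\ge 3$ keeps $\omega_{\cX}(\sum_i a_i s_i)$ in the stable range and the relevant systems very ample. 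For the degree-$m$ surfaces in $\PP^3$ and the $(K^2,p_g)=(2,0)$ surfaces the mechanism is analogous: the family is chosen so that its finite-level Chow semistable limit acquires a singularity strictly worse than semi-log-canonical, the hypotheses $m>30$ and $(K^2,p_g)=(2,0)$ (for the latter, $p_g=0$ forces the embedding to high pluricanonical level, amplifying the finite-level effect) being precisely what make the destabilizing subgroup dominate the competing ones in the Hilbert--Mumford comparison.

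I expect the main obstacle to be this finite-level GIT computation, namely pinning down $W_r$ exactly. In the asymptotic regime K-stability delivers a clean semi-log-canonical answer, but at a fixed level $r$ one must carry out a genuine Hilbert--Mumford/Kempf optimal-destabilization analysis, tracking the lower-order (Chow) corrections to the weight that distinguish level $r$ from the limit $r\to\infty$; it is these corrections, and the fact that they persist as $r$ runs through sufficiently divisible values, that force a non-slc limit for every such $r$. Once $W_r$ is known, the remaining steps — the reduction through log K-semistability and the verification that a non-slc pair is log Chow unstable for all large $k$ — are comparatively formal.
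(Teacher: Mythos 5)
There is a genuine gap, and it sits exactly where you flag it. Your plan hinges on explicitly identifying the Chow semistable limit $W_r$ at each finite level $r$ by a Hilbert--Mumford/Kempf optimal-destabilization analysis, and you acknowledge that this ``finite-level GIT computation'' is the main obstacle without carrying it out. That computation is not a technicality: finite-level Chow semistable limits are essentially unknown for surfaces, and even for pointed curves they are the subject of a separate research program. The paper's proof is designed precisely to avoid ever identifying $W_r$. It constructs explicit KSBA-stable families whose special fiber is shown \emph{not} to be asymptotically Chow semistable (for the surface examples via Mumford's bound ${\rm mult}_x(X)\le(\dim X+1)!$ applied to T-singularities of multiplicity $\ge 8$; for the curve examples via the Li--Wang combinatorial criterion \eqref{basic}), and then invokes Theorem \ref{t-main2}: by the height-minimization property of Chow semistable fillings (Proposition \ref{min}), the asymptotic identification of heights with DF invariants (Proposition \ref{ch-ht}), the DF-minimization of KSBA fillings (Theorem \ref{t-minK}), and the uniqueness of KSBA compactifications (Theorem \ref{t-uni}), \emph{any} filling whose special fiber is log Chow semistable for infinitely many $k$ would have to coincide with the KSBA filling --- a contradiction. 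So whatever $W_r$ is, it cannot be weakly asymptotically log Chow semistable, and one never needs to know what it is.

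Your reduction also aims at the wrong target. You propose to show $W_r$ is not semi-log-canonical, but the whole point of the paper's examples is that the KSBA limits \emph{are} semi-log-canonical (indeed K-stable) and nevertheless fail asymptotic Chow semistability; the obstruction is the multiplicity bound or the Li--Wang inequality, not a failure of slc. Worse, in the curve case the paper observes that the same slc curve \emph{does} become asymptotically Chow semistable after twisting the polarization by $\sO_X(\sum_i b_iX_i)$, so the level-$r$ Chow semistable limit may well be an slc pair carrying a polarization that is merely not proportional to $\omega_X(\ba\cdot\bx)$. A test that only detects non-slc-ness would then return nothing, and your argument would not close. To handle that case you would need precisely the family-version minimization and uniqueness statements (Theorems \ref{t-main}, \ref{t-minK}) that constitute the technical core of the paper, which your proposal compresses into ``an Odaka-type statement'' about a single fiber.
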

The explicit families are given in Section \ref{s-ex}. From this result, the fact that we can use GIT to construct $\overline{\cM}_g$ seems to be completely a coincidence.
In fact, Theorem \ref{t-hyper} follows from the following main technical results of this paper.

\begin{thm}\label{t-main}
Let $(\cX,\sD)\to B$ be a KSBA-stable family (see Definition \ref{d-mstable}) over a smooth curve $B$. Let $r$ be a positive integer such that $r(K_{\cX}+\cD)$ is Cartier. Let $b_0\in B$ such that for any $b\in B^{\circ}=B\setminus \{ b_0\}$,  the fiber $(\cX_b,\sD_b; \omega_{\mathcal{X}}^{[r]}(rD)|_{\cX_{b}})$ is asymptotically log Chow semistable.

Then for any flat family $(\cX^{g},\sD^{g}; \sL)$ of polarized log varieties  over  $B$ satisfying
\begin{equation}\label{open}
(\cX^{g},\sD^{g}; \sL)\times_B{B^{\circ}} \cong (\cX,\sD;\omega_{\sX}^{[r]}(r\sD))\times_BB^{\circ}
\end{equation}
and with the special fiber $(\cX^{g}_{b_0},\sD^g_{b_0}; \sL^{\otimes k}|_{\sX_{b_0}})$ being log Chow semistable for infinitely many $k>0$, we have
$$(\cX^{g},\sD^{g}; \sL)\cong (\cX,\sD; \omega^{[r]}_{\sX}(\sD) ).$$
\end{thm}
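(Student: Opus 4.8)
The plan is to prove a \emph{separatedness} statement: among all fillings over $b_0$ of the punctured family $(\cX,\cD;\omega^{[r]}_{\cX}(r\cD))\times_B B^\circ$, the KSBA filling is singled out by minimizing an intersection-theoretic height, and weak asymptotic log Chow semistability of the competing central fiber forces that filling to attain the same height, hence to coincide with the KSBA one. To compare the two families, which agree over $B^\circ$, I first pass to the normalization $\cZ$ of the graph of the birational map between $\cX$ and $\cX^g$ over $B$, with projections $p\colon\cZ\to\cX$ and $q\colon\cZ\to\cX^g$ that are isomorphisms away from the central fibers; after compactifying $B$ if needed, this turns the statement into an intersection-theoretic comparison on $\cZ$.

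Next I would exploit that $(\cX,\cD)$ is the relative log canonical model. By the definition of the canonical model together with the negativity lemma, I can write $K_{\cZ}+\cD_{\cZ}=p^\ast(K_{\cX}+\cD)+E$ with $E\ge 0$, $p$-exceptional and supported over $b_0$, and compare $q^\ast\sL$ to $p^\ast\omega^{[r]}_{\cX}(r\cD)$ by a further divisor supported over $b_0$. Expanding the difference of the two heights then writes it as a sum of intersection numbers against $E$ and this correction divisor; negativity of the central-fiber intersection form gives this difference a definite sign, namely that the KSBA height is $\le$ the height of $(\cX^g,\sL)$, with equality exactly when $E=0$ and the polarizations agree.

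The crux is to read off the reverse inequality from stability, using the height theory of Section \ref{sheight}. The hypothesis that the generic fibers are asymptotically log Chow semistable fixes the KSBA height as the extremal reference value, while log Chow semistability of $(\cX^g_{b_0},\cD^g_{b_0};\sL^{\otimes k})$ for infinitely many $k$ says that the Hilbert--Mumford/Chow weight of the one-parameter degeneration carrying $(\cX^g,\sL)$ to the canonical filling is nonpositive at those levels; passing to the limit $k\to\infty$ extracts the leading term and yields that the height of $(\cX^g,\sL)$ is $\le$ the KSBA height. Combining with the previous paragraph forces equality, hence $E=0$ and $q^\ast\sL\cong p^\ast\omega^{[r]}_{\cX}(r\cD)$, so neither $p$ nor $q$ contracts anything over $b_0$ and $(\cX^g,\cD^g;\sL)\cong(\cX,\cD;\omega^{[r]}_{\cX}(r\cD))$.

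The main obstacle I anticipate is precisely this translation step: converting GIT semistability of the central fiber---an assertion ranging over \emph{all} one-parameter subgroups, and only for infinitely many $k$ rather than all large $k$---into the single sharp intersection inequality comparing $\cX^g$ with the canonical model. The delicate points are that $\sL$ restricts to an a priori arbitrary, non-pluricanonical, polarization on the central fiber, so the comparison of polarizations must be tracked carefully, and that the lower-order-in-$k$ terms of the Chow weight must be controlled so that the leading Donaldson--Futaki/CM contribution, which is what the height difference measures, genuinely governs the sign.
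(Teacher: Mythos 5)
Your strategy is exactly the paper's: combine (i) the fact that a filling whose central fiber is Chow semistable for infinitely many $k$ minimizes the geometric height, whose leading term in $k$ is the Donaldson--Futaki invariant (Proposition \ref{min}, Proposition \ref{ch-ht}(3), Corollary \ref{c-normal}), with (ii) the MMP-style computation on the graph --- nonnegativity of discrepancies coming from log canonicity of $(\cX,\cD+\cX_{b_0})$, plus the Hodge index theorem applied to the divisor $E$ supported on the central fiber --- showing the KSBA filling minimizes $\DF$ with equality only in the isomorphism case (Theorem \ref{t-minK}). The two inequalities force equality, hence $E=0$ and agreement of the polarizations, just as you describe.

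The one step that does not go through as written is the very last one. Knowing $E=0$ and $q^\ast\sL\cong p^\ast\omega^{[r]}_{\cX}(r\cD)$ on the normalized graph $\cZ$ tells you that $\cX\cong\mathrm{Proj}\,R(\cZ/B,q^\ast\sL)$, but to identify this with $\cX^g$ you need $q_\ast\sO_{\cZ}=\sO_{\cX^g}$, i.e., you need $\cX^g$ itself to be normal. Normalizing the graph does not supply this: $q\colon\cZ\to\cX^g$ could be finite, birational, and not an isomorphism (for instance if $\cX^g$ is obtained from $\cX$ by pinching along the central fiber), and such a $q$ ``contracts nothing over $b_0$,'' so your concluding sentence does not yet yield $\cX^g\cong\cX$. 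The paper closes exactly this gap with Proposition \ref{normal} and Corollary \ref{c-normal}(1): normalization (or the $S_2$-hull) strictly decreases the geometric height unless the total space is already normal, so Chow semistability of the central fiber for infinitely many $k$ --- which forces $(\cX^g,\sL^{\otimes k})$ to minimize the height --- also forces $\cX^g$ to be normal. This is the same height machinery you are already invoking, so the repair is short, but it is a necessary additional step. A smaller imprecision: there is no literal one-parameter subgroup ``carrying $(\cX^g,\sL)$ to the canonical filling''; the height comparison between the two Chow sections is made via a $G$-invariant section of a power of the Chow line bundle that is nonvanishing at the semistable point, as in Theorem \ref{CH0}(2), rather than via a Hilbert--Mumford weight of a degeneration between the two fillings.
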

As an immediate consequence, we  obtain a criterion to tell  when there is no asymptotically Chow semistable filling.

\begin{thm}\label{t-main2}
Let us continue the notation as above. If we assume the polarized pair
$$(\cX_{b_0},\cD_{b_0};\omega_{\sX}^{[r]}(r\sD)|_{\cX_{b_0}})$$
is not (weakly) asymptotically log Chow semistable. Then for any  flat family $(\cX^{g},\sD^{g}; \sL)$ satisfying (1) as above, $(\cX^{g}_{b_0},\sD^g_{b_0}; \sL|_{\sX_{b_0}})$ is not (weakly) asymptotically Chow semistable.
\end{thm}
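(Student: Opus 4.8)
The plan is to deduce this statement entirely from Theorem \ref{t-main} by contraposition, exploiting the fact that the hypothesis on the special fiber in that theorem is precisely the definition of weak asymptotic log Chow semistability.

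First I would fix an arbitrary flat family $(\cX^g,\sD^g;\sL)$ satisfying the open condition \eqref{open}, and assume toward a contradiction that its central fiber $(\cX^g_{b_0},\sD^g_{b_0};\sL|_{\sX_{b_0}})$ is (weakly) asymptotically Chow semistable. Unwinding the definition, this says that the Chow point of $(\cX^g_{b_0},\sD^g_{b_0})$ under the embedding by $|\sL^{\otimes k}|_{\sX_{b_0}}|$ is semistable for infinitely many $k>0$. The non-weak case is subsumed here: asymptotic semistability gives semistability for all $k\gg 1$, hence for infinitely many $k$, so both parenthetical versions of the assertion follow from the same argument.

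Next I would feed this into Theorem \ref{t-main}. Its standing hypotheses---that $(\cX,\sD)\to B$ is KSBA-stable, that $r(K_\cX+\cD)$ is Cartier, and that every fiber over $B^{\circ}$ is asymptotically log Chow semistable---are inherited verbatim from the setup, while the assumption just made supplies exactly the remaining hypothesis that the central fiber is log Chow semistable for infinitely many $k$. Applying the theorem therefore yields the isomorphism $(\cX^g,\sD^g;\sL)\cong(\cX,\sD;\omega^{[r]}_\sX(r\sD))$ of families of polarized log varieties over $B$.

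Finally I would restrict this isomorphism to the fiber over $b_0$, identifying $(\cX^g_{b_0},\sD^g_{b_0};\sL|_{\sX_{b_0}})$ with the canonical central fiber $(\cX_{b_0},\cD_{b_0};\omega^{[r]}_\sX(r\sD)|_{\cX_{b_0}})$. By the hypothesis of the present theorem the latter is \emph{not} (weakly) asymptotically log Chow semistable, directly contradicting the assumption that the former is. I do not expect a genuine obstacle here, since all the geometric content has been absorbed into Theorem \ref{t-main}; the one point requiring care is the bookkeeping that matches the ``infinitely many $k$'' clause in the definition of weak asymptotic semistability with the corresponding hypothesis of Theorem \ref{t-main}, so that negating the desired conclusion reproduces that hypothesis in both the weak and the non-weak cases.
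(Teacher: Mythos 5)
Your proposal is correct and matches the paper's own argument: the authors likewise assume the new family has a (weakly) asymptotically log Chow semistable special fiber, invoke Theorem \ref{t-main} to force the isomorphism with the KSBA family, and contradict the hypothesis on $(\cX_{b_0},\cD_{b_0};\omega_{\sX}^{[r]}(r\sD)|_{\cX_{b_0}})$. Your extra remark matching the ``infinitely many $k$'' clause to the hypothesis of Theorem \ref{t-main} is the right bookkeeping and is implicit in the paper.
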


Now we explain our strategy to prove Theorem \ref{t-main}.
Let us  recall one observation  in \cite{LX}, which says that given a family of canonically polarized varieties over a punctured curve, the KSBA-stable compactification minimizes DF invariants which are the degrees of the CM-line bundle defined in \cite{PT10} (see Section \ref{s-mmp}).
 The new input we had  in this note is that  a similar property holds for GIT stability as well.  In fact, by applying the theory  of height introduced in \cite{Wa12} (see Subsection \ref{s-height}), which plays a similar role as the degree of the CM-line bundle but for Chow line bundle, we observed that the Chow semistable filling minimizes the geometric height (see Subsection \ref{ss-hd}). However, as the limit of the normalized geometric heights is identical to the DF invariant, we immediately conclude the two special fibres for this two families must be isomorphic using the uniqueness of KSBA-stable compactification.

Finally, to prove Theorem \ref{t-main2}. We assume there is a new family with a asymptotically log Chow semistable special fiber  and satisfies \eqref{open}. Then by Theorem \ref{t-main} the central fiber must be isomorphic to $(\cX_{b_0},\cD_{b_0};\omega_{\sX}^{[r]}(r\sD)|_{\cX_{b_0}})$, contradicting to our assumption that it is not asymptotically  log Chow semistable.

In conclusion, the moral of this paper is better being illustrated as the following diagram
\begin{diagram}
\mbox{KSBA-stability} & & \hLine^{=} & &\mbox{K-stability}\\
& \rdLine_{\varsupsetneqq} & & \ldLine_{\mbox{limit}}\\
& & \mbox{asymptotic Chow Stability\ .}
\end{diagram}
\noindent{\bf Notations and Conventions }
See \cite{KM98} for the definitions of {\it discrepancies} and various singularities of log pairs, e.g., log canonical, klt etc.. For a proper morphism $f:Y\to X$ and a $\mathbb{Q}$-divisor $D$ on $Y$, we denote $R(Y/X,D):=\bigoplus_{m=0}f_*(mD)$.

\vskip3pt
\noindent
{\bf Acknowledgement}. The authors would like to express our gratitude to J\'anos Koll\'ar
and Baosen Wu for bring this question into our attention, also to J\'anos Koll\'ar for providing us the hypersurfaces examples in Subsection \ref{ss-sur}.
We thank Yi Hu and Yuji Odaka for their very helpful comments on the early version.  We also want to thank Prof. Tian  for point out the reference \cite{PT10}.
This collaboration started from
XW's visit to the Mathematics Department of University of Utah in September 2012. He
thanks the faculty members there for the hospitality and the research environment
they produced. CX is partially supported by grant DMS-1159175, Warnock Presidential
Endowed Chair and the Chinese government grant 'Recruitment Program of Global
Experts'.
\vskip3pt
\noindent

\section{GIT height}\label{sheight}
In this section, to make our exposition self-contained we first review the theory  of {\em GIT  height} for a family
of GIT problem as introduced in \cite{Wa12}.  In particular, we show GIT height decreases along semistable replacement. Then we will apply these constructions to study Chow stable polarized varieties. At the end, we prove that among all the birational families over $B$ that are isomorphic over an open set $U\subset B $,  over which the fibers are all Chow semistable,   a family with {\it all} fiber being Chow semistable minimizes the  GIT height.

\subsection{GIT height}\label{s-height}
To begin with,  let $( Z,\sO_Z(1)) $ be a projective variety with an action of a reductive
algebraic group $G,$ and the polarization $\sO_Z(1) $
is a $G$-linearized ample line bundle on $Z$, that is, there is a $G$-action
on the total space of $\sO_Z(1) $ that covers its
action on $Z$.

\begin{defn}\label{git}
Let $z\in Z$ and $\lambda :\mathbb{C}^*\to G$ be a one
parameter subgroup. Let
\begin{equation*}
z_{0}:=\lim_{t\to 0}\lambda ( t) \cdot z
\end{equation*}%
Then the \emph{$\lambda$-weight} of $z$, which will be denoted by $%
w_{z}( \lambda ) \in \mathbb{Z},$ is the weight of $\mathbb{C}%
^*$-action on $\sO_{Z}( 1) |_{z_{0}}\cong \mathbb{%
C}.$ A point $z$ is called \emph{(semi-)stable} with respect to the $G$%
-linearization of $\sO_{Z}( 1) $ if $w_{z}( \lambda
) $($\geq $)$>0$ for all one parameter subgroups $\lambda :\mathbb{C}%
^*\to G.$
\end{defn}

A {\em family GIT} problem over a smooth proper curve $B$ consists of the
following data:

\begin{itemize}
\item \label{ass1}Let $( Z,\sO_{Z}( 1) ) $ be
a polarized projective manifold with an action of a connected reductive
algebraic group $\tilde{G}$, where $\sO_{Z}( 1) $ is
\emph{very ample }and $\tilde{G}$-linearized. We assume the induced
representation
\begin{equation}\label{repr}
\rho :\ti{G}\lra {\rm GL}( H^{0}( Z,\sO_Z(1)))
\end{equation}
satisfies
\begin{equation}
\mathrm{Im}\rho \supset \mathbb{C}^*\cdot I,  \label{center}
\end{equation}
where $I$ is the identity in ${\rm GL}( H^{0}( Z,\sO_Z(1))) $. Let
\begin{equation}
G:=\ker ( \det \circ \rho ).  \label{G}
\end{equation}
We study the GIT problem for the $G$-action on $( Z,\sO_{Z}( 1)) .$

\item \label{ass2}Let
$$
\begin{CD}
\ti G @>>>\mathrm{Fr}^{\ti G}\\
    @.   @VV\pi V \\
    @.      B
\end{CD}
$$
be a principal $\ti G$-bundle over $B.$
\end{itemize}
Let
$
\pi _{\cZ}:\cZ:=\mathrm{Fr}^{\ti G}\times _{\ti G} Z\lra B
$
be  the fibration associated to ${\rm Fr}^{\ti G}$.
By our assumption that $\sO_{Z}( 1) $ is very ample and $\tilde{G}$-linearized, we have%
\begin{equation}
\sO_{\mathcal{Z}}( 1) :=\mathrm{Fr}^{\tilde{G}}\times _{\tilde{G}}%
\sO_{Z}( 1)  \label{O1}
\end{equation}%
is $\pi $-\emph{relative very ample. }And for a fixed $b\in B,$ there is
an isomorphism
$$\iota _{b}:( \mathcal{Z}_{b}=\pi _{\mathcal{Z}}^{-1}( b) ,\sO_{\mathcal{Z}}( 1) |_{\mathcal{Z}_{b}}) \cong ( Z,\sO_{Z}( 1) ) .$$
Moreover, there is a bundle morphism
$$
\begin{CD}
\cZ @>>>\PP\sE\\
@V\pi_{\cZ}VV @VV\pi V\\
B @= B
\end{CD}
$$
where $\sE=\pi _{\mathcal{Z}\ast }\sO_{\mathcal{Z}}( 1) =\mathrm{Fr}^{\tilde{G}}\times _{\tilde{G}}H^{0}( Z,\sO_{Z}( 1) ) \to B.$ Over $\mathcal{Z},$ there is a natural vector bundle $\pi _{\mathcal{Z}}^{\ast }\sE^{\vee }( 1) =\pi _{\mathcal{Z}}^{\ast }\sE^{\vee }\otimes \sO_{\mathcal{Z}}( 1) =\mathcal{H}om( \pi _{\mathcal{Z}}^{\ast }\sE,\sO_{\mathbb{P}\sE}( 1) |_{\mathcal{Z}}) .$

\begin{defn}
\label{ht}We define the \emph{height} $\cH_{( \cZ,\sO_{\cZ}( 1) ) }$ for the family $(
\mathcal{Z},\sO_{\mathcal{Z}}( 1) ) $ to be the
determinant line bundle $\det \pi _{\mathcal{Z}}^{\ast }\sE^{\vee
}( 1) \in {\rm Pic}( \mathcal{Z}) ,$ where ${\rm Pic}(
\mathcal{Z}) $ is the Picard variety of $\mathcal{Z}.$ For any
section $s$ of the fibration $\pi :(\cZ,\sO_\cZ(1))\to  B$ we define the \emph{height
of }$s$ to be
\begin{eqnarray*}
h_{( \mathcal{Z},\sO_{\mathcal{Z}}( 1) )
}( s)
&:=&\deg s^\ast \cH_{( \cZ,\sO_{\cZ}( 1) ) }=\deg( s^{\ast }\det \pi _{\mathcal{Z}}^{\ast }\sE^{\vee }( 1) )\\
&=&( N+1)\deg(s^{\ast }
\sO_{\mathcal{Z}}( 1) ) -\deg( \pi _{\mathcal{Z}\ast }\sO_{\mathcal{Z}}( 1) )
\end{eqnarray*}%
where $N+1$ is the rank of $\sE$.
\end{defn}

Let $s$ be a section of the fibration $\pi _{\mathcal{Z}}:( \mathcal{Z},
\sO_{\cZ}(1))\to B$.
It induces a $\tilde{G}$-equivariant $Z$-valued function of ${\rm Fr}^{\tilde{G}}$ which is, by abuse of notation, still called $s$, i.e., $s:\mathrm{Fr}^{\tilde{G}}\to  Z $ is a morphism satisfying
$s( \cdot g)=g^{-1}\cdot s( \cdot ) ,\forall g\in \tilde{G}$.
For any $k\geq 1,$ suppose that $\sigma \in H^{0}( Z,\sO_{Z}( k) ) ^{G}$ is a $G$-invariant section, then it
induces a composite morphism
\begin{equation*}
\sigma \circ s:{\rm Fr}^{\tilde{G}}\overset{s}{\longrightarrow }Z\overset{\sigma }{\longrightarrow }\sO_{Z}( k).
\end{equation*}%
By our assumption (\ref{center}), for any $g\in \tilde{G},$ there is a $h\in
\tilde{G}$ such that $\rho ( h) =\lambda I$ and $h^{-1}\cdot g\in
G.$ These imply
\begin{equation*}
\det \rho ( g) =\det \rho ( h) =\lambda ^{N+1}\text{
and }\rho _{k}( g) \cdot \sigma =\rho _{k}( h) \cdot
\sigma =\lambda ^{k}\sigma
\end{equation*}%
where $\rho _{k}:\tilde{G} \to {\rm GL}( H^{0}( Z,\sO_{Z}( k) ) ) $ is induced from the linearization of $\tilde{G}$ on $\sO_{Z}( 1) .$ Hence for any $x\in {\rm Fr}^{\tilde{G}},$
\begin{equation*}
\sigma \circ s( x\cdot g) =\sigma ( g^{-1}\cdot s(
x) ) =\rho _{k}( g) ^{-1}( \sigma \circ s)
( x) =\lambda ^{-k}\sigma \circ s( x) \in \sO_{Z}( k) |_{s( x) }\cong \mathbb{C}
\end{equation*}%
In particular, if $k=( N+1) k^{\prime }$ then $\sigma \circ
s( x\cdot g) =( \det \rho ( g) )
^{-k^{\prime }}\cdot \sigma \circ s( x) .$ So $\sigma \circ s$
defines a  section of the line bundle
$$( \det \sE) ^{\otimes ( -k^{\prime }) }\otimes s^{\ast }\sO_{\mathcal{Z}}( k) =\det ( s^{\ast }( \pi _{\mathcal{Z}}^{\ast }\sE) ) ^{\otimes ( -k^{\prime })}\otimes s^{\ast}\sO_{\mathcal{Z}}( k) $$
since $\pi _{%
\mathcal{Z}}\circ s=id,$ that is,
\begin{equation}
s^*(\sigma)=\sigma \circ s\in
H^{0}( B,s^{\ast }(
\det ( \pi _{\mathcal{Z}}^{\ast }\sE^{\vee }( 1)
) ) ^{\otimes k^{\prime }}) . \label{section}
\end{equation}%
Thus we obtain the following theorem, which refines \cite[Theorem 1.1]{CH}.

\begin{thm}
\label{CH0}Let $B$ be a smooth projective curve and
$$
\xymatrix{Z \ar@{>}[r] &\cZ \ar@{>}[d]^\pi \\
                  &B}
$$
be the fiberation we constructed above. Let $s_{0},s_{1}$ be two sections of
the fibration $\mathcal{Z}$ over $B$ and $b_{0},b_{1}\in B$ are two distinct
points

\begin{enumerate}
\item Suppose $s_{0}( b_{0}) \in \mathcal{Z}_{b_{0}}$ is
a semistable point with respect to the line bundle $\sO_{\mathcal{Z}}(
1) |_{\mathcal{Z}_{b_{0}}}$ then $h_{( \mathcal{Z},\sO_{%
\mathcal{Z}}( 1) ) }( s_{0}) \geq 0.$ Suppose
further $s_{0}( b_{1}) \in \mathcal{Z}_{b_{1}}$ is unstable with
respect to $\sO_{\mathcal{Z}}( 1) |_{\mathcal{Z}_{b_{1}}}$
then $h_{( \mathcal{Z},\sO_{\mathcal{Z}}( 1) )
}( s_{0}) >0.$

\item Suppose for any $b\in B\backslash \{ b_{0}\} $ we have $%
s_{0}( b) \in G\cdot s_{1}( b) \subset \mathcal{Z}_{b}$
and $s_{0}( b_{0}) $ is unstable but $s_{1}( b_{0}) $
is semistable. Then
\begin{equation*}
h_{( \mathcal{Z},\sO_{\mathcal{Z}}( 1) )
}( s_{0}) >h_{( \mathcal{Z},\sO_{\mathcal{Z}}(
1) ) }( s_{1}) .
\end{equation*}
\end{enumerate}
\end{thm}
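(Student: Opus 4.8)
The plan is to derive Theorem~\ref{CH0} as a consequence of the explicit height formula from Definition~\ref{ht} together with the correspondence, established just before the statement, between $G$-invariant sections $\sigma \in H^0(Z, \sO_Z(k))^G$ and honest sections of the line bundle $s^*(\det(\pi_{\cZ}^*\sE^\vee(1)))^{\otimes k'}$ over $B$, valid whenever $k = (N+1)k'$. The key observation is that the height $h_{(\cZ,\sO_{\cZ}(1))}(s_0) = \deg s_0^*\cH_{(\cZ,\sO_{\cZ}(1))}$ is precisely the degree of the line bundle $\cL := s_0^*(\det(\pi_{\cZ}^*\sE^\vee(1)))$ on $B$, so that statements about signs of heights become statements about whether $\cL$ or its powers admit nonzero sections with controlled vanishing. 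Thus the entire proof should be reduced to linear-algebra-of-weights combined with the elementary fact that a line bundle of negative degree on a smooth projective curve has no nonzero global sections.

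For part (1), I would argue as follows. Since $s_0(b_0)$ is semistable, by the Hilbert--Mumford numerical criterion there exists some $k$ (divisible by $N+1$, say $k=(N+1)k'$) and a $G$-invariant section $\sigma \in H^0(Z,\sO_Z(k))^G$ with $\sigma(s_0(b_0)) \neq 0$. Via \eqref{section} this $\sigma$ pulls back to a nonzero global section $s_0^*(\sigma) \in H^0(B, \cL^{\otimes k'})$. A nonzero section on a smooth projective curve forces $\deg \cL^{\otimes k'} \geq 0$, hence $k'\, h_{(\cZ,\sO_{\cZ}(1))}(s_0) = \deg \cL^{\otimes k'} \geq 0$, giving the first inequality $h(s_0)\geq 0$. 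For the strict inequality, I would use that $s_0(b_1)$ is unstable: then every $G$-invariant section must vanish at $s_0(b_1)$, so the pulled-back section $s_0^*(\sigma)$ vanishes at $b_1 \in B$. A section that vanishes somewhere on the curve and is not identically zero has image in a line bundle of strictly positive degree, so $\deg \cL^{\otimes k'} \geq 1 > 0$, yielding $h(s_0) > 0$ after dividing by $k'$.

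For part (2), the hypothesis is that $s_0$ and $s_1$ agree up to the $G$-action over the open set $B \setminus \{b_0\}$, so the associated sections $\sigma\circ s_0$ and $\sigma\circ s_1$ of the respective line bundles differ only by the action of $G$, which preserves $\sigma$ by $G$-invariance; hence for any invariant $\sigma$ they have the same divisor away from $b_0$. Consequently the difference $h(s_0) - h(s_1)$ is concentrated at the single point $b_0$, and equals the difference of the local vanishing orders of $s_i^*(\sigma)$ at $b_0$ (normalized by $k'$). Choosing $\sigma$ invariant and nonvanishing at the semistable point $s_1(b_0)$ — which exists by semistability — the section $s_1^*(\sigma)$ does not vanish at $b_0$, while $s_0^*(\sigma)$ must vanish at $b_0$ because $s_0(b_0)$ is unstable and every invariant section vanishes there. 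This gives a strictly larger vanishing order for $s_0$ at $b_0$, hence $h(s_0) > h(s_1)$.

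The main obstacle I anticipate is not the curve-geometry step, which is routine, but ensuring the weight bookkeeping is uniform: one must select a single $k = (N+1)k'$ that simultaneously works for the semistability witness at $b_0$ and for the instability conclusion at $b_1$ (resp.\ for comparing $s_0$ and $s_1$), and one must verify that the identification \eqref{section} is compatible with restriction to fibers so that ``$\sigma$ nonvanishing at the point $s_i(b_0)\in Z$'' translates correctly into ``$s_i^*(\sigma)$ nonvanishing at $b_0\in B$'' under $\iota_{b_0}$. The delicate point is the passage from the pointwise Hilbert--Mumford criterion (phrased in terms of one-parameter subgroups and weights $w_z(\lambda)$ as in Definition~\ref{git}) to the existence of an actual invariant section separating semistable from unstable points; this is exactly the content of the fundamental theorem of GIT, and I would invoke it rather than reprove it, taking care that the linearization used is $\sO_Z(1)$ with the group $G = \ker(\det\circ\rho)$ so that the notions of (semi)stability match those in the family construction.
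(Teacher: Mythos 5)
Your proposal is correct and follows essentially the same route as the paper: invoke the fundamental theorem of GIT to produce an invariant section $\sigma$ nonvanishing at the semistable point, use the identification \eqref{section} to view $s_i^*(\sigma)$ as a global section of $s_i^{\ast}(\det(\pi_{\mathcal{Z}}^{\ast}\sE^{\vee}(1)))^{\otimes k'}$ on $B$, and then read off the (strict) inequalities from nonnegativity of degrees of effective divisors on a projective curve, with the unstable fiber contributing an extra zero. The technical caveats you flag (uniformity of $k$ and compatibility of \eqref{section} with restriction to fibers) are handled implicitly in the paper in exactly the way you describe.
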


\begin{proof}
It follows from GIT (cf. \cite{MFK}) that if $s_0( b) \in ( \mathcal{Z}_{b},\sO_{\mathcal{Z}}( 1) |_{\mathcal{Z}_{b}})
\cong ( Z,\sO_{Z}( 1) ) $ is semistable with
respect the $G$-action then there is a $G$-invariant section
$$\sigma \in H^{0}( Z,\sO_{Z}( k) ) ^{G}$$ for some $k\gg 1$
such that $\sigma ( s_0( b) ) \neq 0.$ Without loss of
generality, we may assume that $k=( N+1) k^{\prime }$.

By the construction above (c.f. \eqref{section}), we have
\begin{equation*}
s_0^*(\sigma )\in H^{0}( B,s_0^{\ast }( \det ( \pi ^{\ast }\sE^{\vee }( 1) ) ) ^{\otimes k^{\prime
}})
\end{equation*}%
such that $\sigma \circ s_0( b) \neq 0,$ this implies \
\begin{eqnarray*}
0 &\leq &\deg( s_0^{\ast }( \det ( \pi ^{\ast }\sE^{\vee }( 1) ) ) ^{\otimes k^{\prime }}) \\
&=&k^{\prime }( ( N+1) \deg( s_0^{\ast }\sO_{\mathcal{Z}}( 1) ) -\deg( \sE) )
\\
&=&k^{\prime }[ ( N+1) \deg( s_0^{\ast }\sO_{\mathcal{Z}}( 1) ) -\deg( \det \pi _{\ast }\sO_{\mathcal{Z}}( 1) ) ] ,
\end{eqnarray*}%
which is exactly what we want to prove. For the further part, we only need
to notice that $s_{0}( b_1) $ being unstable implies that
$\sigma ( s_{0}( b_1) ) =0$, i.e., $s_0^*(\sigma)$ is a nonvanshing section which has at least one zero.

For the second part, it follows from our assumption
that $s_{1}^*(\sigma)$ has the same zero section as $s_0^*(\sigma)$  along  $B\setminus \{b_0\}$, but at $b_0$ loses at least one zero by replacing $
s_{0}( b_{0}) $ by $s_{1}( b_{0}) $ in $\mathcal{Z}_{b_{0}}.$ Hence we obtain
\begin{equation*}
\deg ( s_{1}^{\ast }( \det ( \pi _{\mathcal{Z}}^{\ast }%
\sE^{\vee }( 1) ) ) ^{\otimes k^{\prime
}}) \leq \deg s_{0}^{\ast }( \det ( \pi _{\mathcal{Z}}^{\ast
}\sE^{\vee }( 1) ) ) ^{\otimes k^{\prime }}-1.
\end{equation*}

\end{proof}

	\begin{rem}It follows from the semistable replacement property of GIT (see {\em \cite[5.3]{Mum}}), after a possible base change of the family $\cZ\to B$, we can always decrease the {\em height} by replacing the unstable value (e.g. $s_0(b_0)$ in Theorem \ref{CH0}) by a semistable one (e.g. $s_1(b_0)$ ).
\end{rem}

\subsection{Geometric height}\label{geometric
height}
Now we apply the set-up in the previous subsection to the study a proper
flat family of polarized varieties with respect to Chow stability.  Recall for a subvariety $X\subset \PP^N$ of degree $d$, the corresponding Chow point
$$
\Chow(X)\in \PP^{d,n}:= \PP(Sym^d (\CC^{N+1} )^{\otimes(n+1)}).
$$
is the degree $(d,\cdots,d)$-hypersurface in $(\check{\PP}^N ) ^{n+1}$  consisting
of points $(H_1,\cdots, H_{n+1})\in (\check{\PP}^N ) ^{n+1}$, where $\check{\PP}^N$ is the dual of $\PP^N$, such that $H_i\subset \PP^N $ are hyperplanes satisfying $H_1\cap\cdots \cap H_{n+1}\cap X\ne \emptyset$. The group ${\rm SL}(N+1)$ naturally acts on $\PP^{d,n}$ and $\sO_{\PP^{d,n}}(1)$ is ${\rm SL}(N+1)$-linearized.  We say $X$ is Chow \emph{(semi-)stable} if
$\Chow(X)\in\PP^{d,n} $ is (semi-)stable with respect to the ${\rm SL}(N+1)$-linearization of the hyperplane bundle $\sO_{\PP^{d,n}}(1)$. Similarly, let $D=\Sigma_{i=1}^l a_iD_i$ be a divisor of $X$ with each $D_i\subset X$ being a prime divisor and $(a_1,\cdots,a_l)\in \QQ^l_{>0}$.
We let
$$
\Chow(X,D):=(\Chow(X),\Chow(D_1),\cdots,\Chow(D_l))\in \PP^{d,n}\times \prod_{i=1}^l\PP^{d_i,n-1}
$$
denote the Chow point corresponding to the log pair $(X,D)$, where $d_i=\deg(D_i)$. Let us consider the {\em diagonal} action of  ${\rm SL}(N+1)$ on $\PP^{d,n}\times \prod_{i=1}^l\PP^{d_i,n-1}$.
 We say $(X,D)$ is \emph{log Chow (semi-)stable} if
$\Chow(X,D)\in\PP^{d,n}\times \Pi_{i=1}^l\PP^{d_i,n-1} $, where $\bd:=(d,d_1,\cdots,d_l)$,
is GIT \emph{(semi-)stable} (cf. Definition \ref{git}) with respect to the ${\rm SL}(N+1)$-linearization of integral multiple of the $\QQ$-line bundle
$$\pi^{\ast}_0\sO_{\PP^{d,n}}(1)\otimes\pi^{\ast}_1\sO_{\PP^{d_1,n-1}}(c_na_1)\otimes\cdots\otimes\pi^{\ast}_l\sO_{\PP^{d_l,n-1}}(c_n a_l)
$$
covering the diagonal action of ${\rm SL}(N+1)$, where $\pi_i$ denote the projection to the $i^{\rm th}$ factor and $c_n=\frac{n+1}{2n}$. We denote
$$ \PP^{\bd,n}:=\PP^{d,n}\times \Pi_{i=1}^l\PP^{d_i,n-1}, \mbox{ where }\bd:=(d,d_1,\cdots,d_l).$$
\begin{exmp}[cf. \cite{Hass, LW}]\label{weight-pt-curve}
A {\em weighted pointed nodal curve} $(X,\ba\cdot \bx)$
consists of a reduced, connected nodal curve $X$, $l$ ordered (not necessarily distinct) {\em smooth} points
$$\bx=(x_1,\cdots,x_l)\in X^l$$ of $X$, and weights
$\ba=(a_1,\cdots,a_l)$, $a_i\in \QQ_{>0}$, of $\bx$, such that the total weight at any point is no more than one
(i.e. for any $p\in X$, $\sum_{x_i=p} a_i\leq 1$).   If in addition, $\omega_X(\ba\cdot\bx)$ is ample
then we call  $(X,\ba\cdot\bx)$ a {\em weighted pointed stable curve}.

 A polarized weighted pointed nodal curve is a weighted pointed
nodal curve together with a  very ample  polarization $L$.  Then $(X,\ba\cdot \bx; L)$ is  {\em Chow (semi-)stable} iff
$$\Chow(X,\ba\cdot\bx)\in \PP^{\bd,1}$$
 obtained from the embedding $X\subset \PP H^0(L)$  is GIT {\em (semi-)stable} with respect to the ${\rm SL}(N+1)$-linearization of integral multiple of the $\QQ$-line bundle
$$\pi^{\ast}_0\sO_{\PP^{d,1}}(1)\otimes\pi^{\ast}_1\sO_{\PP^{1,0}}(a_1)\otimes\cdots\otimes\pi^{\ast}_l\sO_{\PP^{1,0}}( a_l)
$$
since $n=1$ and $c_1=1$.
\end{exmp}

To fit the above geometric GIT construction into a family version over a proper smooth curve $B$, let us briefly summarize the construction of Chow section due to
Mumford (for details please see \cite[section 5.4]{MFK}, \cite[section 1.3]{Z} or \cite[section 4.3]{BGS}). Let $B$ be an integral scheme $\sE$ be a
locally free sheaf of rank $N+1$ over $B,$ and $\mathcal{X}$ be an effective
cycle of $\mathbb{P}\sE:=$Proj$( Sym^{\ast }\sE) $, the projective space over $B,$ whose components are flat and of relative dimension $n$ over $B$. Thus we have diagram

$$
\xymatrix{\cX\ar@{>}[r] \ar@{>}[d]_\pi    &\PP\sE \ar@{>}[d]^\pi \\
                  B\ar@{=}[r] &B}
$$

Let $\sO_{\mathbb{P}\sE}( 1) $ and $\sO_{%
\mathbb{P}\sE^{\vee }}( 1) $ denote the hyperplane line
bundle of $\mathbb{P}\sE$ and $\mathbb{P}\sE^{\vee }$
respectively. Then the canonical section of $\sE \otimes \sE^{\vee },$
which is dual to the canonical pairing $\sE^{\vee }\otimes
\sE \to \sO_{B},$ gives a section $\Delta $ of $\sO_{\mathbb{P}%
\sE}( 1) \mathcal{\boxtimes O}_{\mathbb{P}\sE%
^{\vee }}( 1) $ over $\mathbb{P}\mathcal{E\times }\mathbb{P}%
\sE^{\vee }.$ Let $\pi _{i}$ denote the $i$-th projection
\begin{equation*}
\pi _{i}:( \mathbb{P}\sE^{\vee }) ^{n+1}\longrightarrow
\mathbb{P}\sE^{\vee }
\end{equation*}%
and $\Delta _{i}$'s be the corresponding sections of $\sO_{\mathbb{P}\sE}( 1) \mathcal{\otimes \pi }_{i}^{\ast }\sO_{%
\mathbb{P}\sE^{\vee }}( 1) $ on $\mathbb{P}\sE\otimes ( \mathbb{P}\sE^{\vee }) ^{n+1}.$ Let
\begin{equation*}
\Gamma :=\bigcap\limits_{i=0}^{n}\Delta _{i}^{-1}( 0) .
\end{equation*}%
If we regard the points of $\mathbb{P}\sE^{\vee }$ as hyperplanes of$%
\mathbb{\ P}\sE$ then
\begin{equation*}
\Gamma =\{ ( x,H_{0},\cdots ,H_{n}) \in \mathbb{P}\mathcal{%
E\otimes }( \mathbb{P}\sE^{\vee }) ^{n+1}\mid \text{ \ }x\in
H_{i},\forall i\} .
\end{equation*}%
And if we regard $\Gamma $ as a correspondence from $\mathbb{P}\sE$
to $( \mathbb{P}\sE^{\vee }) ^{n+1}$%

$$
\xymatrix{& \Gamma\subset\PP\sE\otimes(\PP\sE)^{n+1}\ar@{>}[dl]_{p_1}  \ar@{>}[dr]^{p_2}& \\
 	  \PP\sE & &(\PP\sE^\vee)^{n+1}}
$$
then
\begin{equation*}
Y
:=p_{2\ast }( p_{1}^{\ast }( \mathcal{X}) \cap \Gamma
) \subset ( \mathbb{P}\sE^{\vee }) ^{n+1}
\end{equation*}%
will be a divisor of degree $( d,\cdots ,d) $ of $( \mathbb{P}\sE^{\vee }) ^{n+1}$ whose components are flat (c.f. \cite{BGS}, Lemma 4.3.1) over $B,$ where $d$ is the degree of $\mathcal{X}_{b}\subset \mathbb{P}\sE_{b}:=\pi ^{-1}( b) $ for general $b\in B.$

Let $\sO_{\fP^{d,n}}( 1) $ be the hyperplane
bundle of
\begin{equation*}
\fP^{d,n}:=\mathbb{P}[ ( Sym^{d}\sE)
^{\otimes ( n+1) }] \longrightarrow B\ .
\end{equation*}
Then the canonical pairing of $( Sym^{d}\sE) ^{\otimes
( n+1) }\otimes ( Sym^{d}\sE^{\vee })
^{\otimes ( n+1) }$ gives rise to a section $\Delta ^{\prime }$ of
the line bundle
$$\sO_{\fP^{d,n}}( 1) \otimes\mathcal{\pi }_{0}^{\ast }\sO_{\mathbb{P}\sE^{\vee }}(d) \otimes \cdots \otimes \mathcal{\pi }_{n}^{\ast }\sO_{\mathbb{P}\sE^{\vee }}( d) $$
on $\fP^{d,n}\times (\mathbb{P}\sE^{\vee }) ^{n+1}.$ By treating points of $\mathbb{P%
}[ ( Sym^{d}\sE) ^{\otimes ( n+1) }]
$ as hypersurfaces of $( \mathbb{P}\sE^{\vee }) ^{n+1}$ of
degree $( d,\cdots ,d) $, we can regard
\begin{equation*}
\Gamma ^{\prime }=\{ \Delta ^{\prime }=0\} =\{ (
H,y_{0},\cdots ,y_{n}) |( y_{0},\cdots ,y_{n}) \in
H\} .
\end{equation*}%
as a correspondence%

$$
\xymatrix{& \Gamma'\subset\fP^{d,n}\otimes(\PP\sE^\vee)^{n+1}\ar@{>}[dl]_{p_1}  \ar@{>}[dr]^{p_2}& \\
 	  \fP^{d,n} & &(\PP\sE^\vee)^{n+1}}
$$
and section
\begin{equation}
s_{(\cX,\sL)}=p_{1\ast }( p_{2}^{\ast }Y
\cap \Gamma ^{\prime })  \label{chow}
\end{equation}%
of $\fP^{d,n}$ over $B$ corresponds to $Y( \mathcal{X}%
) $ via $\Gamma ^{\prime }$ is the \emph{Chow section} for $\mathcal{X}%
.$


Let us apply the construction above to the relative Chow varieties of a family of $n$-dimensional log varieties satisfying the following
\begin{assum}\label{cond}
{\em
\label{cond} Suppose that $\cX$ is a separated scheme of dimension $%
n+1$ and $\cD=\Sigma_{i=1}^la_i\cD_i$ with $\cD_i\subset \cX$ be prime divisors. Let
\begin{equation*}
\pi :(\cX,\cD)\longrightarrow B
\end{equation*}%
be a flat proper morphism over a smooth projective curve $B$.  Let $\sL\rightarrow \cX$ be a line bundle
such that $\sE:=\pi _{\ast }\sL$  is locally free of rank $N+1 $.
Suppose that the following conditions are satisfied:
}

\begin{enumerate}
\item $\sL$ is relatively very ample and $H^0(\sX_b, \sL_b)\cong \sE_b$ for every $b\in B$;
\item There is a nonempty open set $U\subset B$ such that  $\Chow(\mathcal{X}_{b},\cD_{b})$ is a semistable point with respect to the polarization of $\sL|_{\cX_{b}}$ for all $b\in U$.
\end{enumerate}
\end{assum}
From the above assumption, we may view the pair $(\cX,\cD)$ as a family of  effective cycles
of $\PP\sE$ with $\sE=\pi _{\ast }\sL$,
$$
\xymatrix{(\cX,\cD)\ar@{>}[r] \ar@{>}[d]_\pi    &\PP\sE \ar@{>}[d]^\pi \\
                  B\ar@{=}[r] &B}
$$
whose components are flat and of dimension $n$ over $B.$
Denote by $d_0=\deg \sX_b, d_i=\deg\cD_i|_b,\ 1\leq i\leq l$, and we define
$$\fP^{\bd,n}:= \fP^{d_0,n}\times\prod_{i=1}^l\fP^{d_i,n-1} \qquad \mbox{with }\bd=(d_0,\bar\bd):=(d_0,d_1,\cdots,d_l).$$ For $\ba=(a_1,\cdots,a_l)\in (\QQ\cap[0,1])^{l}$, we introduce a $\QQ$-line bundle
\begin{equation}\label{O_C}
\sO_{\fP^{\bd,n}}(1,\ba):=\pi^{\ast}_0\sO_{\fP^{d,n}}(1)\otimes\pi^{\ast}_1\sO_{\fP^{d_1,n-1}}(c_na_1)\otimes\cdots\otimes\pi^{\ast}_l\sO_{\fP^{d_l,n-1}}(c_na_l)
\end{equation}
on $\fP^{\bd,n}$, where $\pi_i$ denote the projection of $\fP^{\bd,n}$ to the $i^{\rm th}$ factor.
  Applying Mumford's
construction of Chow section outlined above we obtain section $s_{(\cX,\cD;\sL) }$.

$$
\xymatrix{&\fP^{\bd,n}\ar@{>}[d]^\Pi \\
                 B\ar@{>}[ur]^{s_{(\cX,\cD;\sL)}} \ar@{=}[r] & B}
$$
By subsection \ref{s-height}, there is a {\em height } (cf. Definition \ref{ht}) attached to the above family of GIT problem with $G={\rm SL}(N+1)$ and $\tilde{G}={\rm GL}(N+1)$.
To unwind the underlying  meaning  of the height  in terms of the
geometry of $( \cX,\cD;\sL) ,$ we have the following:
\begin{prop}
\label{ch-ht}For the section $s_{( \cX,\cD;\sL) }$
constructed above, we have

\begin{enumerate}
\item Choosing $m\in \ZZ$ such that $c_nm\cdot\ba\in \ZZ^{l+1}$, the \emph{Chow line} is defined to be
\begin{eqnarray*}
{\rm Pic}( B)&\ni &\Lambda ^{\rm Chow}( \sL):=\cH_{(\fP^{\bd,n},\sO_{\fP^{\bd,n}}(m,m\ba))}\\
&=&\left( \det s_{(
\cX,\cD;\sL) }^{\ast }\left(\Pi^{\ast}\left( \Pi _{\ast }\sO_{%
\fP^{\bd,n}}\left(m,m\ba\right )\right)^{\vee }\otimes\sO_{\fP^{\bd,n}}\left( m,m\ba\right) \right)\right
) ^{\otimes N+1}\\
&=&\left(\left((\sL^{\langle
n+1\rangle }) ^{\otimes m}\otimes (\sL|_{\cD}^{\langle
n\rangle }) ^{\otimes c_nma}\right)^{\otimes (N+1)}\otimes( \det
\sE^{\vee }) ^{\otimes (m(n+1)d+c_nmn\ba\cdot \bar\bd)}\right ) ^{\otimes
K_{\bd,\ba,m}}
\end{eqnarray*}
where $\sL^{\langle n+1\rangle }$ and $\sL|_{\cD}^{\langle n\rangle }$  denote the Deligne pairings for the family $(\cX,\sL)\to B$ and $(\cD,\sL|_\cD)\to B$ respectively (cf. {\em\cite[Chapter 6]{De}}),  $\sE=\pi_{\ast}(\sL),\ a=\sum_ia_i$, and $$K_{\bd,\ba,m}=\dim\left(Sym^m(Sym^d(\CC^{N+1}))\oplus\bigoplus_{i=1}^lSym^{c_nma_i}(Sym^d(\CC^{N+1}))\right)\ .$$

\item We define the \emph{geometric height} to be
\begin{eqnarray*}
&&h( \cX,\cD;\sL) \\
&:=&\frac{1}{m\cdot K_{\bd,\ba,m}}h_{ \left(\fP^{\bd,n},\sO_{\fP^{\bd,n}}(m,m\ba)\right) }( s_{(
\cX,\cD;\sL) }) = \deg(\Lambda^{\rm Chow}(\sL))\\
&=&\frac{1}{( N+1)^{n}}\{( N+1)  \sL-{\rm det }\pi ^{\ast }( \pi _{\ast }\sL) \} ^{n+1}\\
& &+c_n\sum_{i=1}^l \frac{a_i}{( N+1)^{n-1}}\{ ( N+1)  \sL|_{\cD_i}-(\pi^{\cD_i}) ^{\ast }\det \pi _{\ast }\sL \} ^{n}
\end{eqnarray*}

\item  Let us define the Donaldson-Futaki invariant for the family $(\cX,\cD;\sL)\rightarrow B$ as
$$\DF((\cX,\cD)/B,\sL):= ( n+1)
\sL ^n\cdot(K_{\cX/B}+\cD)- n\sL ^{n+1}\mu
$$
with
$\mu :=\frac{L^{n-1}\cdot(K_X+D)}{L^n}$.
Then we have for $k\gg 1,$
\begin{eqnarray*}
&&h( \cX,\cD;\sL^k) \\
&=&\frac{\deg X}{2n!}\left((n+1)
\sL ^n\cdot(K_{\cX/B}+\cD)- n\sL ^{n+1}\mu \right) k^{2n}+O( k^{2n-1}) \\
&=&\frac{\deg X}{2n!}\DF(
(\cX,\cD)/B,\sL) k^{2n}+O( k^{2n-1}).
\end{eqnarray*}%

\end{enumerate}
\end{prop}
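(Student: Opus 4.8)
The plan is to establish (1) first, since (2) is obtained from it by taking degrees on $B$ and (3) follows from (2) by an asymptotic Riemann--Roch computation in $k$. The real content, and the step I expect to be the main obstacle, is the third equality in (1): the identification of the height $\cH$ of Mumford's Chow section with the Deligne pairings. Once this is available, everything else is bookkeeping of intersection numbers and of the two leading coefficients of two Hilbert-type polynomials.

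For (1), the first equality is the definition of $\Lambda^{\rm Chow}(\sL)$ as $\cH_{(\fP^{\bd,n},\sO_{\fP^{\bd,n}}(m,m\ba))}$, and the second equality is Definition \ref{ht} written out for the section $s_{(\cX,\cD;\sL)}$ and the polarization $\sO_{\fP^{\bd,n}}(m,m\ba)$. For the third equality I would invoke Mumford's construction of the Chow section together with the classical identity (cf. \cite{De}; see also \cite{CH, Z, BGS}) that the pullback of the hyperplane bundle on $\fP^{d,n}$ under the Chow section of a flat family of $n$-dimensional cycles is, up to the $(\det\sE)^\vee$-twist dictated by the ${\rm GL}(N+1)$-linearization, the Deligne pairing $\langle\sL,\dots,\sL\rangle=\sL^{\langle n+1\rangle}$. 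Applying this to the factor $\fP^{d_0,n}$ yields the contribution $\sL^{\langle n+1\rangle}$ of $(\cX,\sL)$, and applying it to each factor $\fP^{d_i,n-1}$, on which the Chow section records the divisor $\cD_i$, yields $\sL|_{\cD_i}^{\langle n\rangle}$; the weights $c_na_i$ in the polarization pass to the exponents of the boundary Deligne pairings, while the overall multiplicity $K_{\bd,\ba,m}$ is the rank of $\Pi_*\sO_{\fP^{\bd,n}}(m,m\ba)$, i.e. the displayed dimension.

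Given (1), I would prove (2) by taking degrees on $B$. The normalization $\tfrac{1}{m\,K_{\bd,\ba,m}}$ is designed to remove the outer exponents in (1), so that $h(\cX,\cD;\sL)=\deg\Lambda^{\rm Chow}(\sL)$. Using the defining property of Deligne pairings, $\deg\sL^{\langle n+1\rangle}=\sL^{n+1}$ on $\cX$ and $\deg\sL|_{\cD_i}^{\langle n\rangle}=\sL^{n}\cdot\cD_i$ on $\cD_i$, together with $\deg(\det\sE^\vee)=-\deg\pi_*\sL$, turns the formula of (1) into intersection numbers. The key algebraic point is that $\det\pi^*(\pi_*\sL)$ is pulled back from the curve $B$, hence squares to zero; therefore $\tfrac{1}{(N+1)^n}\{(N+1)\sL-\det\pi^*(\pi_*\sL)\}^{n+1}$ collapses to $(N+1)\sL^{n+1}-(n+1)\,(\deg X)\deg\pi_*\sL$, and likewise for the boundary brackets. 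Matching these against the degree of the formula in (1) gives precisely the two-line expression for $h(\cX,\cD;\sL)$.

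Finally, for (3) I would substitute $\sL$ by $\sL^{\otimes k}$ in the formula of (2) and expand in $k$. By Riemann--Roch on the fibre, $N+1=\chi(\cX_b,\sL_b^{\otimes k})=\tfrac{\deg X}{n!}k^{n}-\tfrac{L^{n-1}\cdot K_X}{2(n-1)!}k^{n-1}+O(k^{n-2})$, and relative Grothendieck--Riemann--Roch gives $\deg\pi_*\sL^{\otimes k}=\tfrac{\sL^{n+1}}{(n+1)!}k^{n+1}-\tfrac{\sL^{n}\cdot K_{\cX/B}}{2\,n!}k^{n}+O(k^{n-1})$. Plugging these into the $\cX$-term $(N+1)(\sL^k)^{n+1}-(n+1)(\deg X)k^n\deg\pi_*\sL^k$, the two leading Hilbert coefficients balance ($\tfrac{\deg X}{n!}\sL^{n+1}=(n+1)\deg X\cdot\tfrac{\sL^{n+1}}{(n+1)!}$), so the $k^{2n+1}$ term cancels and the surviving $k^{2n}$ coefficient equals $\tfrac{\deg X}{2n!}\bigl((n+1)\sL^n\cdot K_{\cX/B}-n\sL^{n+1}\tfrac{L^{n-1}\cdot K_X}{\deg X}\bigr)$. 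The boundary terms are handled identically, one dimension lower, with the weight $c_n$ calibrating the boundary Donaldson--Futaki contribution; assembling the $\cX$- and $\cD$-parts and using $\mu=\tfrac{L^{n-1}\cdot(K_X+D)}{L^n}$ produces $\tfrac{\deg X}{2n!}\DF((\cX,\cD)/B,\sL)\,k^{2n}+O(k^{2n-1})$, as claimed.
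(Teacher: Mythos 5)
Your proposal is correct and follows essentially the route the paper intends: the paper itself omits the computation entirely, stating only that it ``completely parallels \cite[Proposition 11]{Wa12},'' and the argument carried out there is exactly your three-step scheme (Chow section pullback identified with Deligne pairings \`a la \cite{Z, BGS}, degrees computed using that $\det\pi^*(\pi_*\sL)$ is pulled back from the curve so its square vanishes, then asymptotic Riemann--Roch in $k$ with the $k^{2n+1}$ cancellation and the weight $c_n=\tfrac{n+1}{2n}$ calibrating the boundary terms). Your intersection-number bookkeeping in (2) and the $k^{2n}$ coefficient extraction in (3) check out.
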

\begin{proof}It is a straightforward calculation which completely parallels to \cite[Proposition 11]{Wa12}. We omit the details.
\end{proof}

For the later application, we will need the following technical result.
\begin{prop} \label{normal}
Let $(\cX,\cD;\sL)$ be a flat family satisfying Assumption \ref{cond} and let
$\nu: \cX^\nu \to \cX$
be a finite morphism, which does not change the generic fiber over $B$. Assume $\cX^{\nu}$ is flat over $B$. Then
\begin{enumerate}
\item
There is a $k_0\in \NN$ so that for $k\geq k_0$ the new family
$(\cX^\nu, \nu^{-1}_\ast\cD; (\nu^\ast \sL)^{\otimes k})$ satisfies Assumption \ref{cond};
\item We have
$$h(\cX^\nu,\nu^{-1}_\ast\cD; (\nu^\ast \sL)^{\otimes k})\leq  h(\cX, \cD; \sL^{\otimes k})$$
with the equality holds if and only if $\cX^\nu\cong \cX$ and
$$  \DF((\cX^\nu,\nu^{-1}_\ast\cD)/B; \nu^\ast \sL)\leq  \DF((\cX, \cD)/B; \sL).$$
 \end{enumerate}
\end{prop}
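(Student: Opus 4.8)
The plan is to read everything off the explicit intersection-theoretic formula for the geometric height in Proposition \ref{ch-ht}(2), after pinning down precisely how the two families differ. Since $\nu$ is finite and does not change the generic fibre, it is birational of degree one, an isomorphism over some open $U'\subseteq B$, and the natural map $\sO_{\cX}\to \nu_*\sO_{\cX^\nu}$ has cokernel $\sQ$ supported on the non-isomorphism locus, which lies over the finite set $B\setminus U'$. For part (1), I would fix $k$ large enough that $(\nu^*\sL)^{\otimes k}$ is relatively very ample (pullback of a relatively ample bundle along a finite morphism is relatively ample, and a large power of a relatively ample bundle is relatively very ample) and that $R^1\pi^\nu_*(\nu^*\sL)^{\otimes k}=0$ by relative Serre vanishing; then $\sE^\nu:=\pi^\nu_*(\nu^*\sL)^{\otimes k}$ is locally free and commutes with base change, giving the first condition of Assumption \ref{cond}, while the semistability condition holds over $U'$, where the new fibres coincide with the Chow semistable fibres of the original family for the relevant power of $\sL$.

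The heart of the argument is the determinant comparison. Twisting $0\to \sO_{\cX}\to \nu_*\sO_{\cX^\nu}\to \sQ\to 0$ by $\sL^{\otimes k}$, applying $\pi_*$, and using the projection formula together with $R^1\pi_*\sL^{\otimes k}=0$, I obtain an exact sequence $0\to \sE \to \sE^\nu\to \pi_*(\sL^{\otimes k}\otimes \sQ)\to 0$ on $B$, where $\sE=\pi_*\sL^{\otimes k}$. Because $\nu$ is an isomorphism over $U'$, the two bundles have the same rank $N+1$, and the quotient $T:=\pi_*(\sL^{\otimes k}\otimes \sQ)$ is a torsion sheaf supported on $B\setminus U'$ of some length $\ell=\ell(k)\geq 0$, with $\ell>0$ whenever $\sQ\neq 0$ (for $k$ large). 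Hence $\det \sE^\nu \cong \det \sE\otimes \sO_B(D_\ell)$ for an effective divisor $D_\ell$ of degree $\ell$.

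Substituting into Proposition \ref{ch-ht}(2), write $A:=(N+1)\sL^{\otimes k}-\pi^*\det\sE$ on $\cX$ and $A_i:=(N+1)\sL^{\otimes k}|_{\cD_i}-(\pi^{\cD_i})^*\det\sE$ on $\cD_i$; the corresponding classes on $\cX^\nu$ and $\nu^{-1}_*\cD_i$ are $\nu^*A-F$ and $\nu^*A_i-F_i$, where $F,F_i$ are pullbacks of $D_\ell$, hence numerically $\ell$ times a fibre class with $F^2=F_i^2=0$. Since $\nu$ has degree one, the projection formula gives $(\nu^*A)^{n+1}=A^{n+1}$ and $(\nu^*A_i)^n=A_i^n$, while $(\nu^*A)^n\cdot F=\ell(N+1)^n d$ and $(\nu^*A_i)^{n-1}\cdot F_i=\ell(N+1)^{n-1}d_i$ (because $\pi^*\det\sE$ restricts trivially to a fibre, so $A|_{\cX_b}=(N+1)\sL^{\otimes k}|_{\cX_b}$ and likewise for $A_i$), with $d,d_i$ the generic fibre degrees. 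Expanding and discarding the terms killed by $F^2=F_i^2=0$,
\begin{align*}
h(\cX^\nu,\nu^{-1}_*\cD;(\nu^*\sL)^{\otimes k})-h(\cX,\cD;\sL^{\otimes k})
&=-(n+1)\ell d - c_n n\,\ell\sum_{i=1}^l a_i d_i\\
&=-\ell\Big[(n+1)d+c_n n\sum_{i=1}^l a_i d_i\Big]\leq 0,
\end{align*}
since $d,d_i>0$, $a_i>0$ and $c_n>0$. The bracket being strictly positive, equality forces $\ell=0$; for $k$ large this gives $\sQ=0$, i.e. $\nu_*\sO_{\cX^\nu}=\sO_{\cX}$, which for a finite morphism means $\cX^\nu\cong\cX$, and the converse is clear.

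Finally, the inequality just proved holds for every $k\geq k_0$, so comparing the leading $k^{2n}$-coefficients of the two sides through the asymptotic expansion of Proposition \ref{ch-ht}(3) — whose leading term is $\tfrac{\deg\cX_b}{2n!}\DF$, with the same generic degree $\deg\cX_b=\deg\cX^\nu_b$ for both families — yields $\DF((\cX^\nu,\nu^{-1}_*\cD)/B;\nu^*\sL)\leq \DF((\cX,\cD)/B;\sL)$, finishing (2). The step I expect to be the main obstacle is the determinant bookkeeping: one must verify that the two bundles genuinely have equal rank, that the cokernel is torsion of nonnegative length vanishing exactly when $\nu$ is an isomorphism, and that $R^1\pi_*=0$ (hence local freeness and base change) holds uniformly for all $k\geq k_0$. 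Once this is in place, the vanishings $F^2=F_i^2=0$ and the degree-one projection formula render the intersection computation essentially mechanical.
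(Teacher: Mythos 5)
Your proof is correct and follows essentially the same route as the paper: the same exact sequence $0\to\sO_{\cX}\to\nu_*\sO_{\cX^\nu}\to\sQ\to 0$ twisted by $\sL^{\otimes k}$, the same observation that the torsion pushforward $\pi_*(\sQ\otimes\sL^{\otimes k})$ increases $\deg\det\pi_*$ (with equality iff $\sQ=0$), the degree-one projection formula for the top self-intersection, and the $k^{2n}$-asymptotics of Proposition \ref{ch-ht}(3) for the $\DF$ inequality. You are somewhat more explicit than the paper (in particular you carry out the intersection bookkeeping for the boundary terms $\cD_i$, which the paper leaves implicit), but the underlying argument is identical.
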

\begin{proof}
The first part follows from the assumption that  $\nu$ is a finite morphism. For the second part,  if we write  $\pi^\nu=\pi\circ \nu$  then for any $k\in \NN$ we have
\begin{eqnarray*}
&&\frac{1}{(N_k+1)^n}\left\{ ( N_k+1) k( \nu^\ast\sL)
-\pi ^{\nu\ast }\det\left(\pi^\nu_\ast\nu^\ast\sL^{\otimes k}\right) \right\} ^{n+1}\\
&=&(N_k+1)k^{n+1}\sL^{n+1}- k^n(n+1) \deg X \cdot \deg \left(\det \left(\pi_\ast\nu_\ast\nu^\ast\sL^{\otimes k}\right)\right)
\\
\end{eqnarray*}
where $\deg X=[\sL|_{\cX_b}]^n$ for a general fiber $X=\cX_b$ and $N_k+1=\dim H^0(\cX_b,\sL|_{\cX_b})$, thanks to the projection formula $ \nu_*(\nu^*\sL)^{n+1}  = \sL^{n+1}$.
For the second term we consider the  exact sequence
$$
\begin{CD}
0@>>>\sO_{\cX}@>>>\nu_\ast\sO_{\cX^\nu}@>>>\sQ@>>>0.
\end{CD}
$$
Since by our assumption the support of $\sQ$ does not dominate $B$,  $\pi_\ast(\sQ\otimes_{\sO_\cX}\sL^{\otimes k})$ is a torsion sheaf on $B$. So for $k$ sufficiently large, we have $c_1(\pi_\ast(\sQ\otimes_{\sO_\cX}\sL^{\otimes k}))\geq 0$ and hence
\begin{eqnarray*}
c_1(\pi_\ast\nu_\ast(\nu^\ast\sL^{\otimes k}))
&=&c_1(\pi_*(\nu_\ast (\sO_{\cX^\nu})\otimes_{\sO_{\cX}}\sL^{\otimes k}))\\
&=&c_1(\pi_\ast(\sQ\otimes_{\sO_\cX}\sL^{\otimes k}))+c_1(\det(\pi_\ast(\sL^{\otimes k})))\\
&\ge& c_1(\pi_\ast(\sL^{\otimes k})).
\end{eqnarray*}
with the equality holds if and only if $\sQ=0$.
Putting these together, we immediately conclude that
$$h(\cX^\nu,\nu^{-1}_\ast\cD; (\nu^\ast \sL)^{\otimes k})\le h(\cX,\cD;  \sL^{\otimes k}),$$
with the equality holding if and only if $\sQ=0$. Finally, the decreasing of $\DF$ invariants under normalizaition follows immediately from part 3 of Proposition \ref{ch-ht}.
\end{proof}
\begin{rem}\label{r-normal}
This argument combined with {\em\cite[Theorem 7]{Wa12}} simplify the proof of {\em \cite[Proposition 5.1]{RT07}}. We can apply this to the normalization $\sX^{\nu}$ of $\sX$ when the generic fiber is normal, or more generally, the $S_2$-hull $\sX^{\nu}$ of $\sX$ when the generic fiber satisfies the Serre condition $S_2$.
\end{rem}
\subsection{Height decreases along semistable replacement}\label{ss-hd}
Let
$$
\xymatrix{(\cX,\cD;\sL) \ar@{-->}[rr]^f \ar@{>}[dr]  & &(\cX',\cD';\sL') \ar@{>}[dl]\\
            & B &}
$$
be two birational families satisfying Assumption \ref{cond} and
\begin{equation*}
( \cX^\circ,\cD^\circ;\sL|_{\cX^\circ})
\cong (  \cX^{\prime \circ },\cD^{\prime \circ };\sL^{\prime }|_{ \cX^{\prime \circ }}) \text{
over }B^{\circ }=B\backslash \{ b_{0}\}
\end{equation*}%
where $(\cX^{\circ },\cD^\circ)=(\cX,\cD)\times_{B}B^{\circ }$ and $
(\cX^{\prime \circ },\cD^{\prime \circ})=(\cX^{\prime },\cD^{\prime})\times_{B}B^{\circ }.$ \ Suppose there is an open $U\subset B$ containing  $%
b_{0}$ such that  $(\cX_b,\cD_b;\sL|_{\cX_b})$ is \emph{Chow semistable} for all $b\in U$ but $
(\cX^\prime_{b_0},\cD^\prime_{b_0};\sL^{\prime }|_{\cX
_{b_{0}}^{\prime }}) $ is\ \emph{Chow unstable. }By the construction
we performed in the previous subsection we obtain a section $s_{(
\cX,\cD;\sL) }$ for the fibration
\begin{equation*}
\fP^{\bd,n}\longrightarrow B.
\end{equation*}%
Now we pick two points $b_{0},b_{1}\in U$ and a section
\begin{eqnarray*}
\sigma &\in &H^{0}( \PP[ ( Sym^{d}\sE_{b_{0}}) ^{\otimes ( n+1) }]\times \Pi_{i=1}^l\PP[ ( Sym^{d_i}\sE_{b_{0}}) ^{\otimes n }] ,\sO_{\fP^{\bd,n}}(m,m\ba) |_{b_{0}}) ^{{\rm SL}( N+1) } \\
&\cong &H^{0}( \PP[ ( Sym^{d}\sE_{b_{1}}) ^{\otimes ( n+1) }]\times \Pi_{i=1}^l\PP[ ( Sym^{d_i}\sE_{b_{1}}) ^{\otimes n }] ,\sO_{\fP^{\bd,n}}(m,m\ba) |_{b_{1}}) ^{{\rm SL}( N+1) }
\end{eqnarray*}%
such that $\sigma \circ s_{(\cX,\cD;\sL)} ( b_{i}) \neq 0$ for $i=0,1,$ which can be
achieved by the semi-stability of $( \cX_{b_{i}},\cD_{b_i}; \sL|_{%
\mathcal{X}_{b_{i}}}) $ for $i=0,1.$ By
Theorem \ref{CH0}, we obtain
\begin{equation}\label{<}
0\leq h( \cX,\cD;\sL) <h( \cX^{\prime },\cD^{\prime}; \sL^{\prime }) .
\end{equation}%
Hence we have the following

\begin{prop}
\label{min}Let $( \cX,\cD;\sL) \to B$ be a family satisfying the Assumption \ref{cond}. We fix a point $b_0\in U$. Then for  any family $( \cX^{\prime },\cD^\prime;\sL^{\prime }) $ flat over $B$ satisfying the first part of Assumption \ref{cond} and
\begin{equation*}
( \cX^{\circ },\cD^\circ;\sL|_{\cX^{\circ }})
\cong (  \cX^{\prime \circ },\cD^{\prime \circ };\sL^{\prime }|_{ \cX^{\prime \circ }}) \text{
over }B^{\circ }=B\backslash \{ b_{0}\},
\end{equation*}%
we have
\begin{equation*}
0\leq h( \cX,\cD; \sL) \leq h(\cX^{\prime},\cD^{\prime}
;\sL^{\prime })
\end{equation*}%
with `$=$'~holds if and only if  $\Chow( \cX_{b_{0}}^{\prime },\cD_{b_{0}}^{\prime };
\sL^{\prime }|_{\mathcal{X}_{b_{0}}^{\prime }}) $ is $S$-equivalent to $\Chow( \cX_{b_{0}},\cD_{b_{0}}; \sL|_{\mathcal{X}_{b_{0}}})$ in $ \mathbb{P}[ ( Sym^{d}\mathbb{C}^{N+1}) ^{\otimes ( n+1) }]$.
\end{prop}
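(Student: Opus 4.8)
The plan is to run the Cornalba--Harris / Wang argument already encapsulated in Theorem~\ref{CH0}, but to extract the sharp equality clause by testing against a single well-chosen invariant section and localizing the height difference at $b_0$. Throughout write $s=s_{(\cX,\cD;\sL)}$ and $s'=s_{(\cX',\cD';\sL')}$ for the two Chow sections of the fibrations $\fP^{\bd,n}\to B$ constructed in the preceding subsection. The lower bound is immediate: since $b_0\in U$, the point $s(b_0)$ is log Chow semistable, so part~(1) of Theorem~\ref{CH0} gives at once
$$h(\cX,\cD;\sL)=\deg\Lambda^{\rm Chow}(\sL)\geq 0.$$

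Next comes the localization step, which is the crux. By semistability of $s(b_0)$, for some $k=(N+1)k'\gg 1$ I would choose a $G$-invariant section $\sigma$ of $\sO_{\fP^{\bd,n}}(m,m\ba)^{\otimes k'}$ with $\sigma(s(b_0))\neq 0$. By the center hypothesis \eqref{center} and the computation \eqref{section}, $s^\ast\sigma$ and $(s')^\ast\sigma$ are nonzero sections of powers of the respective Chow line bundles $\Lambda^{\rm Chow}$, and by Proposition~\ref{ch-ht} their divisor degrees recover the geometric heights $h(\cX,\cD;\sL)$ and $h(\cX',\cD';\sL')$ up to one and the same positive factor $c:=k'\,m\,K_{\bd,\ba,m}$. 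Over $B^\circ$ the two families are identified, so $s$ and $s'$ differ by a gauge transformation valued in $\ti G$; since $\sigma$ is $G$-invariant and, by \eqref{center}, the $\ti G$-action merely rescales it by a nowhere-vanishing scalar, $s^\ast\sigma$ and $(s')^\ast\sigma$ differ over $B^\circ$ by a unit. Hence $\mathrm{ord}_b(s^\ast\sigma)=\mathrm{ord}_b((s')^\ast\sigma)$ for every $b\in B^\circ$, while $\mathrm{ord}_{b_0}(s^\ast\sigma)=0$. Subtracting the two degree identities, all contributions off $b_0$ cancel and
$$c\,\bigl(h(\cX',\cD';\sL')-h(\cX,\cD;\sL)\bigr)=\mathrm{ord}_{b_0}\bigl((s')^\ast\sigma\bigr)\geq 0,$$
which is the asserted inequality.

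Finally I would read off the equality clause. The displayed identity holds for every admissible $\sigma$ (invariant, nonvanishing at $s(b_0)$ and at the general fibre), and its left side is the fixed number $c\,(h(\cX',\cD';\sL')-h(\cX,\cD;\sL))$; thus equality is equivalent to $\mathrm{ord}_{b_0}((s')^\ast\sigma)=0$, i.e. $\sigma(s'(b_0))\neq 0$, for every such $\sigma$, that is, every invariant not vanishing at $s(b_0)$ also does not vanish at $s'(b_0)$. If $s'(b_0)$ is unstable every invariant vanishes there, so the inequality is strict (recovering Theorem~\ref{CH0}(2)); if $s'(b_0)$ is semistable, then, since invariant sections of the ample bundle $\sO_{\fP^{\bd,n}}(m,m\ba)$ separate the points of the projective quotient $\PP^{\bd,n}/\!\!/\,{\rm SL}(N+1)$, the condition holds precisely when $s(b_0)$ and $s'(b_0)$ have the same image there, i.e. when the log Chow points are $S$-equivalent. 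As $S$-equivalence to a semistable point forces semistability, this yields exactly the stated criterion.

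The main obstacle is the equality clause, not the inequality. It requires that the height gap be represented by one and the same vanishing order at $b_0$ for all admissible invariants, so that ``$h=h'$'' translates into the common non-vanishing of every such invariant at $s'(b_0)$. Two ingredients make this work: the cancellation over $B^\circ$, where the center condition \eqref{center} is exactly what guarantees that the $\ti G$-gauge relates $s^\ast\sigma$ and $(s')^\ast\sigma$ by a unit rather than a moving section; and the separation of points of the projective GIT quotient by invariant sections (cf. \cite{MFK}), which upgrades ``all invariants agree at $b_0$'' to genuine $S$-equivalence.
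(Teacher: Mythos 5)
Your proposal is correct and follows essentially the same route as the paper: the inequality comes from the Cornalba--Harris/Wang invariant-section argument of Theorem~\ref{CH0} (pulling back a $G$-invariant $\sigma$ nonvanishing at $s(b_0)$ and comparing degrees, with the off-$b_0$ contributions cancelling by the isomorphism over $B^\circ$), and the equality clause reduces to the GIT quotient. The only difference is one of explicitness: the paper disposes of the equality case in one line by asserting that a non-$S$-equivalent limit must be unstable (implicitly invoking separatedness of the quotient over $B$), whereas you spell out the same point via separation of points by invariant sections, which also directly handles the a priori possible case of a semistable but non-$S$-equivalent $s'(b_0)$.
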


\begin{proof}
By the discussion in the beginning of this subsection, we only to prove the `$=$' case. Suppose that $\Chow( \cX_{b_{0}}^{\prime },\cD_{b_{0}}^{\prime }; \sL^{\prime }|_{\cX_{b_{0}}^{\prime }}) $ is not $S$-equivalent
to $\Chow( \cX_{b_{0}},\cD_{b_{0}};\sL|_{\mathcal{X}_{b_{0}}})$ then
it must be unstable and hence we have strict inequality \eqref{<}. Thus our proof is completed.
\end{proof}

\begin{cor}\label{c-normal}
Let $(\cX,\cD;\sL)\to B$ is a family of polarized variety. Assume $(\cX_{b_0},\cD_{b_0}; \sL^{\otimes k}|_{\cX_{b_0}})$ is Chow semistable for {\em infinitely many }$k>0$.
\begin{enumerate}
\item[(1)] If the general fibers $\cX_{b}$ are normal, then the total space $\cX\times_B U$ is normal.
\item[(2)] for any pair $( \cX^{\prime },\cD^{\prime};\sL^{\prime }) $
over $B$ satisfying
\begin{equation*}
( \cX^{\circ },\cD^{\circ}; \sL|_{\mathcal{X}^{\circ }})
\cong (  \cX^{\prime \circ },\cD^{\prime\circ};\sL%
^{\prime }|_{ \cX^{\prime \circ }}) \text{
over }B^{\circ }=B\backslash \{ b_{0}\} .
\end{equation*}
\end{enumerate}
we have $0\leq \DF( (\cX,\cD)/B,\sL) \leq \DF( (\cX^{\prime },\cD^{\prime})/B,\sL^{\prime })$.

\end{cor}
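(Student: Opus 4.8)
The plan is to prove both parts by playing the height‑minimization property of Chow semistable fillings (Proposition \ref{min}) against the fact that normalization decreases the geometric height (Proposition \ref{normal}), and then to extract the statement about $\DF$ from the asymptotic expansion in Proposition \ref{ch-ht}(3).

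For part (1), I would first note that, since the general fibers $\cX_b$ are normal, the non‑normal locus of $\cX$ is closed and does not dominate $B$, so its image in $B$ is a finite set. Rather than normalizing globally, I would introduce the \emph{partial} normalization $\nu\colon \cX^\nu\to\cX$ that agrees with $\MO_\cX$ over $B\setminus\{b_0\}$ and with the full integral closure over a neighborhood of $b_0$. This $\nu$ is finite, is an isomorphism on the generic fiber and over all of $B^\circ=B\setminus\{b_0\}$, and $\cX^\nu$ is again flat over $B$ (its structure sheaf is torsion free over the smooth curve $B$). Concentrating the modification at $b_0$ is precisely what lets me keep the projective base $B$ needed for the degree computations, while comparing with the Chow semistable filling at the single point $b_0$.

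Now choose one of the infinitely many $k$ for which $(\cX_{b_0},\cD_{b_0};\sL^{\otimes k}|_{\cX_{b_0}})$ is Chow semistable and which is large enough for Proposition \ref{normal} to apply. Proposition \ref{normal}(2) gives $h(\cX^\nu,\nu^{-1}_\ast\cD;(\nu^\ast\sL)^{\otimes k})\le h(\cX,\cD;\sL^{\otimes k})$, with equality if and only if $\cX^\nu\cong\cX$. On the other hand, since $\cX^\nu$ and $\cX$ are isomorphic over $B^\circ$ and the central fiber of $\cX$ is Chow semistable for this $k$, Proposition \ref{min} gives the reverse inequality. Hence equality holds, forcing $\cX^\nu\cong\cX$, i.e. the cokernel $\sQ$ of $\MO_\cX\to\nu_\ast\MO_{\cX^\nu}$ vanishes; thus $\cX$ is already normal over $b_0$, hence over a neighborhood of $b_0$. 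Shrinking $U$ to exclude the finitely many other non‑normal fibers (while keeping $b_0$) yields normality of $\cX\times_B U$.

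For part (2), I would apply Proposition \ref{min} to the competing families $(\cX,\cD;\sL^{\otimes k})$ and $(\cX',\cD';\sL'^{\otimes k})$, which agree over $B^\circ$: for every $k$ at which $(\cX_{b_0},\cD_{b_0};\sL^{\otimes k}|_{\cX_{b_0}})$ is Chow semistable one obtains $0\le h(\cX,\cD;\sL^{\otimes k})\le h(\cX',\cD';\sL'^{\otimes k})$. By Proposition \ref{ch-ht}(3) each side is, for $k\gg1$, a polynomial in $k$ of degree $2n$ whose leading coefficient is $\frac{\deg X}{2n!}\DF$ of the corresponding family, where the factor $\deg X$ is common to both since the families share their general fiber. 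Because the inequalities hold for the infinitely many such $k$, which tend to infinity, comparing leading coefficients yields $0\le \DF((\cX,\cD)/B,\sL)\le \DF((\cX',\cD')/B,\sL')$. The step I expect to require the most care is the localization in part (1): Proposition \ref{min} compares fillings differing only over $b_0$, whereas normalization is a priori global, so one must arrange the partial normalization to be concentrated at $b_0$ while preserving flatness and the identification over $B^\circ$. In part (2) the only delicate point is the elementary observation that an inequality between two degree‑$2n$ polynomials holding for infinitely many $k\to\infty$ forces the corresponding inequality on leading coefficients; it is exactly this reduction to leading terms that allows us to work with semistability for merely infinitely many $k$ rather than all $k\gg1$.
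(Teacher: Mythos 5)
Your proof is correct and follows essentially the same route as the paper: part (1) is the normalization argument of Proposition \ref{normal} played off against the height minimality of Proposition \ref{min} (this is exactly the content of Remark \ref{r-normal}, which the paper's proof cites), and part (2) is Proposition \ref{min} applied for the good values of $k$ combined with the asymptotic expansion in Proposition \ref{ch-ht}(3). Your localization of the normalization at $b_0$ and your explicit treatment of ``infinitely many $k$'' versus ``all $k\gg 1$'' are details the paper leaves implicit, but they do not change the argument.
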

\begin{proof}
(1) It follows from Remark \ref{r-normal}.

(2) Suppose there is a pair $( \cX^{\prime },\cD^{\prime};\sL^{\prime
}) $ over $B$ satisfying%
\begin{equation*}
\DF( (\cX,\cD)/B,\sL) >\DF( (\cX^{\prime
},\cD^{\prime})/B,\sL^{\prime }) .
\end{equation*}
By the third part of Proposition \ref{ch-ht}, we deduce that for sufficiently large $k>0$, we
have
\begin{equation*}
h( \cX,\cD;\sL^{\otimes k}) >h( \cX^{\prime}, \cD^{\prime};\sL^{\prime \otimes k}),
\end{equation*}
contradicting to \ Proposition \ref{min}.
\end{proof}

\section{MMP and KSBA-stable family}\label{s-mmp}

\bigskip As we mentioned in the introduction, if we take the limit of the normalized Chow heights then what we obtain is  the degree of the CM line bundle. The corresponding notion of stability that minimizes the degree of CM line bundle is  {\it K-stability}.  In general we do not know whether K-stability yields a reasonable moduli space. However, in the canonically polarized case, there is another notion of stability called {\it KSBA stability} developed to compactify the moduli space of canonically polarized manifolds (see \cite{KSB,Al}), which indeed gives rise to a moduli space of K-stable varieties (see \cite{Od10}). In this section, we prove a family version of Odaka's result, namely, we show that the KSBA stable compactification minimizes the degree of CM line bundle among all compactifications of a KSBA stable family over a non-proper smooth curve, using the similar calculations from \cite{Od10} and \cite{LX}.

For the purpose of this note, we only consider a family over a 1-dimensional space with the generic fiber being normal. All results here can be easily generalized to the case that the generic fiber is semi-log-canonical, which is the class of singularities the fibers of a general KSBA-stable family shall be  allow to have.

\begin{defn}\label{d-mstable}
Let $(\mathcal{X},\mathcal{D})$ be a pair which is projective flat over a smooth curve $B$. We call $(\mathcal{X},\mathcal{D})$ is a {\em KSBA-stable} family over $B$  if
\begin{enumerate}
\item $K_{\mathcal{X}}+\mathcal{D}$ is $\mathbb{Q}$-Cartier,
\item  for any $b\in B$, $(\mathcal{X},\mathcal{D}+\mathcal{X}_b)$ is log-canonical, and
\item$K_{\mathcal{X}}+\mathcal{D}$ is ample over $B$.
\end{enumerate}
\end{defn}

We remark that in this note we only define the KSBA-stable family over a one parameter base. In fact, since in general for a KSBA-stable family $\mathcal{O}_{\cX}(n(K_{\cX}+\sD))\otimes \mathcal{O}_{\cX_{b}}$ may not be the same as $\mathcal{O}_{\cX_{b}}(n(K_{\cX_{b}}+\sD_b))$ for a point $b\in B$,  we avoid the subtlety of defining a KSBA-stable pair in this note.

We know that any component of $\mathcal{D}$ dominates $B$.
We first recall the following theorem proved in \cite{HX}.
\begin{thm}\label{t-uni} Let $(\mathcal{X}^{\circ},\mathcal{D}^{\circ})\to B^{\circ}$ be a flat family over $B^{\circ}=B\setminus \{b_0 \}$, where $  B$ is a smooth curve, which is a KSBA-stable family over $B^{\circ}$. Then there is a finite surjective base change $h:B_1\to B$ with a KSBA-stable family $(\mathcal{X}^s,\mathcal{D}^s)$ over a smooth curve $B_1$ such that
$$(\mathcal{X}^s,\mathcal{D}^{s})|_{h^{-1}(B^{\circ})}\cong (\mathcal{X}^{\circ},\mathcal{D}^{\circ})\times_B h^{-1}(B^{\circ}).$$ Furthermore, if there is another KSBA-stable family $(\tilde{\mathcal{X}}^s,\tilde{\mathcal{D}}^{s})$ satisfies the above condition, then $(\mathcal{X}^s,\mathcal{D}^s)\cong  (\tilde{\mathcal{X}}^s,\tilde{\mathcal{D}}^s)$ over $B_1$.
\end{thm}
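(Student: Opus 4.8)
The plan is to produce the stable filling through \emph{semistable reduction} followed by the relative minimal model program, and to deduce uniqueness by identifying the KSBA-stable family with a relative log canonical model, which is canonically determined. Throughout I work over a neighborhood of $b_0$, the geometry away from $b_0$ being fixed by hypothesis.

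For existence, I would first perform a finite base change $h:B_1\to B$ and apply semistable reduction (in the style of Kempf--Knudsen--Mumford--Saint-Donat) to obtain a projective family $(\mathcal{Y},\mathcal{D}_{\mathcal{Y}})\to B_1$ extending the pullback of $(\mathcal{X}^{\circ},\mathcal{D}^{\circ})$, with $\mathcal{D}_{\mathcal{Y}}$ the closure of $\mathcal{D}^{\circ}$, such that after further blowing up the pair $(\mathcal{Y},\mathcal{D}_{\mathcal{Y}}+\mathcal{Y}_{b_0})$ is log smooth and, crucially, $\mathcal{Y}_{b_0}$ is reduced. I would then run the relative $(K_{\mathcal{Y}}+\mathcal{D}_{\mathcal{Y}})$-MMP over $B_1$. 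Since $K_{\mathcal{X}^{\circ}}+\mathcal{D}^{\circ}$ is ample over $B^{\circ}$, the relative log canonical ring $R(\mathcal{Y}/B_1,K_{\mathcal{Y}}+\mathcal{D}_{\mathcal{Y}})$ is finitely generated by the minimal model program, and I set $(\mathcal{X}^s,\mathcal{D}^s):=\mathrm{Proj}_{B_1}R(\mathcal{Y}/B_1,K_{\mathcal{Y}}+\mathcal{D}_{\mathcal{Y}})$ to be the relative log canonical model. By construction this is isomorphic to the given family over $h^{-1}(B^{\circ})$, and $K_{\mathcal{X}^s}+\mathcal{D}^s$ is $\mathbb{Q}$-Cartier and ample over $B_1$, giving conditions (1) and (3) of Definition \ref{d-mstable}. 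For condition (2), the decisive point is that $\mathcal{Y}_{b_0}$ enters with coefficient one; the MMP preserves log canonicity and reducedness of the central fiber, so $(\mathcal{X}^s,\mathcal{D}^s+\mathcal{X}^s_{b_0})$ is log canonical, which is exactly what is required.

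For uniqueness, suppose $(\mathcal{X}^s,\mathcal{D}^s)$ and $(\tilde{\mathcal{X}}^s,\tilde{\mathcal{D}}^s)$ are two KSBA-stable fillings over $B_1$ agreeing over $h^{-1}(B^{\circ})$. Adding the reduced central fibers, both become log canonical pairs with $K+D$ ample over $B_1$. I would choose a common log resolution $\mathcal{W}$ with morphisms $p:\mathcal{W}\to\mathcal{X}^s$ and $q:\mathcal{W}\to\tilde{\mathcal{X}}^s$, both isomorphisms over $h^{-1}(B^{\circ})$, so that the divisor $\Theta:=p^{*}(K_{\mathcal{X}^s}+\mathcal{D}^s+\mathcal{X}^s_{b_0})-q^{*}(K_{\tilde{\mathcal{X}}^s}+\tilde{\mathcal{D}}^s+\tilde{\mathcal{X}}^s_{b_0})$ is supported on divisors lying over $b_0$. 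Comparing log discrepancies relative to the two pairs and applying the negativity lemma with respect to both $p$ and $q$ forces $\Theta=0$; that is, the two pairs are crepant birational. Since each is the $\mathrm{Proj}_{B_1}$ of its own log canonical ring and these rings are identified through $\mathcal{W}$, the resulting birational map is an isomorphism over $B_1$.

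The main obstacle is controlling the central fiber throughout the MMP, namely guaranteeing that the special fiber of the relative log canonical model is semi-log-canonical and that $(\mathcal{X}^s,\mathcal{D}^s+\mathcal{X}^s_{b_0})$ stays log canonical after every flip and divisorial contraction. This is precisely where reducedness of $\mathcal{Y}_{b_0}$ from semistable reduction and inversion of adjunction are indispensable, and it is the technical heart of the argument carried out in \cite{HX}.
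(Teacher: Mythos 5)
The paper gives no argument of its own for this theorem---its proof is the single line ``See \cite[Section 7]{HX}''---and your outline (semistable reduction, passage to the relative log canonical model via the relative MMP, uniqueness via a common resolution and the negativity lemma) is precisely the strategy carried out in that reference, so your proposal matches the paper's approach. The one point to treat with care is that finite generation of the relative log canonical ring for the resulting \emph{log canonical} (rather than klt) pair is not a formal consequence of the standard MMP but is essentially the main theorem of \cite{HX}; you correctly identify this as the technical heart that you are deferring.
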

\begin{proof} See \cite[Section 7]{HX}.
\end{proof}

It is easy to see that if $\phi:B_2\to B_1$ is a finite morphism, then $(\mathcal{X}^s,{\mathcal{D}}^s)\times _{B_1} B_2$ yield the unique KSBA-stable compactification of $(\mathcal{X}^{\circ},\mathcal{D}^{\circ})\times _{B_{1}}\phi^{-1}(B^0_1)$ over $B_2$.  Next we show that the above process indeed minimizes the \DF-invariant.

\begin{thm}\label{t-minK}
Let $(\sX^s,\D^s)$ be a KSBA-stable family over a smooth projective curve $B$. Let $b_0\in B$ and $B^{\circ}=B\setminus \{ b_0\}$.  Let $( \mathcal{X},\mathcal{D}; \sL) $ be
an arbitrary flat family of polarized varieties over  $B$ satisfying
$$( \mathcal{X},\D;\sL)|_{B^{\circ}} \cong ( \mathcal{X}^s, \sD^s; \omega_{\sX^s}^{[ r]}(r\sD^s))|_{B^{\circ}} $$
 for some $r>0.$ Then we have
\begin{equation*}
\DF( (\mathcal{X}^s,\cD^s)/B,\omega_{\sX^s}^{[r]}(r\sD))  \leq \DF( (\mathcal{X},\cD)/B,\sL).
\end{equation*}%
Furthermore, if  $\mathcal{X}$ is normal  then  the equality holds only if $(\mathcal{X},\sD, \sL)\cong (\mathcal{X}^s,\sD^s, \omega^{[r]}_{\mathcal{X}^s}(r\sD))$ over $B$.
\end{thm}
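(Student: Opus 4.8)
The plan is to express both Donaldson--Futaki invariants as intersection numbers on a single common birational model and to compare them there, following the discrepancy computations of \cite{Od10} and the MMP argument of \cite{LX}. First I would reduce to the case that $\sX$ is normal. By Proposition \ref{normal}, passing to the normalization (or $S_2$-hull) $\nu:\sX^\nu\to\sX$ does not increase the geometric height, hence by Proposition \ref{ch-ht}(3) does not increase $\DF$; since $\sX^s|_{B^\circ}$ is normal, $\nu$ is an isomorphism over $B^\circ$, so $(\sX^\nu,\nu^{-1}_\ast\sD)$ still agrees with $(\sX^s,\sD^s)$ there and it suffices to bound $\DF((\sX^\nu,\nu^{-1}_\ast\sD)/B,\nu^\ast\sL)$. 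Thus I assume $\sX$ normal. Because the generic polarization is $r(K_{\sX/B}+\sD)$, the slope is $\mu=1/r$, and a direct expansion of the formula in Proposition \ref{ch-ht}(3) gives, with $E:=\sL-r(K_{\sX/B}+\sD)$ a $\QQ$-divisor supported on $\sX_{b_0}$,
\begin{equation*}
\DF((\sX,\sD)/B,\sL)=\tfrac1r\bigl(\sL^{n+1}-(n+1)\,\sL^{n}\cdot E\bigr),\qquad \DF((\sX^s,\sD^s)/B,\omega^{[r]}_{\sX^s}(r\sD^s))=r^{n}(K_{\sX^s/B}+\sD^s)^{n+1}.
\end{equation*}

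Next I would fix a common log resolution $p:\sY\to\sX$ and $q:\sY\to\sX^s$, both isomorphisms over $B^\circ$, and pull everything back via the projection formula. Setting $F:=p^\ast(K_{\sX/B}+\sD)-q^\ast(K_{\sX^s/B}+\sD^s)$, which equals the difference $A_{\sX^s}-A_{\sX}$ of the two discrepancy divisors and is supported on the central fibre, the displayed quantities become intersection numbers on $\sY$ built from three ingredients: the nef class $H:=q^\ast(K_{\sX^s/B}+\sD^s)$ (pullback of the ample $K_{\sX^s/B}+\sD^s$), the vertical divisor $F$, and the vertical divisor $p^\ast E$. Telescoping the difference $\DF((\sX,\sD)/B,\sL)-\DF((\sX^s,\sD^s)/B,\omega^{[r]}_{\sX^s}(r\sD^s))$ along these vertical corrections reduces the theorem to proving that a single explicit intersection number on $\sY$ is non-negative.

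The sign is then forced by two inputs. Since $(\sX^s,\sD^s)$ is the KSBA-stable, hence log canonical, model with $K_{\sX^s/B}+\sD^s$ ample, the negativity lemma applied to $q$ shows that the canonical model has the largest discrepancies, pinning down the sign of $F$; and Zariski's lemma --- the negative semidefiniteness of the intersection form on divisors supported on a single fibre, paired against the fibre-nef class $H$ --- controls the vertical contributions of $E$ and $F$. The main obstacle is that the intermediate classes $p^\ast(K_{\sX/B}+\sD)$ and $p^\ast\sL$ are \emph{not} nef, so the telescoped terms cannot be signed naively. The real work, carried out exactly as in \cite{Od10} and \cite{LX} (or, equivalently, by running a relative MMP over $B$ from $\sY$ towards $\sX^s$ and checking that $\DF$ does not increase at each divisorial contraction or flip), is to reorganize the expression so that in every term all slots but one are occupied by the nef class $H$; then negative semidefiniteness of the vertical intersection form yields non-negativity term by term, proving the inequality.

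Finally, for the equality clause (where $\sX$ is normal) I would trace back through these inequalities. Equality forces every vertical contribution to vanish, so the negative semidefinite form gives $E=0$ and $F=0$; hence $\sL=r(K_{\sX/B}+\sD)$ and $p^\ast(K_{\sX/B}+\sD)=q^\ast(K_{\sX^s/B}+\sD^s)$, i.e. the induced birational transformation from $\sX$ to $\sX^s$ is crepant. Since $K_{\sX^s/B}+\sD^s$ is ample and $\sX$ is normal, uniqueness of log canonical models (equivalently Theorem \ref{t-uni}) identifies $\sX$ with its own canonical model $\sX^s$ over $B$, carrying $\sD$ to $\sD^s$ and $\sL$ to $\omega^{[r]}_{\sX^s}(r\sD^s)$, which is exactly the asserted isomorphism.
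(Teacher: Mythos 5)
Your overall strategy --- pass to a common birational model, split $\DF((\sX,\sD)/B,\sL)-\DF((\sX^s,\sD^s)/B,\omega_{\sX^s}^{[r]}(r\sD^s))$ into a discrepancy term and a vertical-divisor term, sign the first by log canonicity and the second by negative semidefiniteness of the vertical intersection form, and settle equality via uniqueness of the log canonical model --- is exactly the paper's. But the step you yourself flag as ``the real work'' is left as a black box, and the target you set for it is not the one that succeeds. Putting the nef class $H=q^*(K_{\sX^s/B}+\sD^s)$ in ``all slots but one'' cannot produce a sign: the semidefiniteness statement needs \emph{two} slots occupied by the vertical divisor, and the mixed terms that actually arise involve $p^*\sL$, which is only relatively nef, so they cannot be signed term by term against $H$ alone. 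The paper's concrete device is an interpolation on the graph $\cX^g\subset\cX\times_B\cX^s$: with $E=p_1^*\sL-rp_2^*(K_{\sX^s/B}+\sD^s)$ it studies $f(t)$ built from $rp_2^*(K_{\sX^s/B}+\sD^s)+tE=(1-t)\,rp_2^*(K_{\sX^s/B}+\sD^s)+t\,p_1^*\sL$, whose derivative is $-tn(n+1)(\cdots)^{n-1}\cdot E^2\ge 0$ by the Hodge index theorem; the crucial point is that this convex combination is \emph{ample over} $B$ for $t\in(0,1)$ precisely because the graph embeds in the fiber product, and that relative ampleness (not mere nefness) is also what forces $E=0$ in the equality case. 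Without this interpolation, or an actual MMP run as in \cite{LX}, the inequality is asserted rather than proved.

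Two further genuine issues. First, your $E:=\sL-r(K_{\sX/B}+\sD)$ and $F:=p^*(K_{\sX/B}+\sD)-q^*(K_{\sX^s/B}+\sD^s)$ presuppose that $K_{\sX/B}+\sD$ is $\QQ$-Cartier on $\sX$, which is not among the hypotheses (and is not restored by normalizing); the paper only ever pulls back $\sL$ (Cartier) and $K_{\sX^s/B}+\sD^s$ ($\QQ$-Cartier by KSBA-stability), and takes $F=K_{\sX^g}+\sD^g-p_2^*(K_{\sX^s/B}+\sD^s)$ as a Weil divisor class on the graph. Second, the effectivity of $F$ does not follow from the negativity lemma applied to $q$ --- that would require $-F$ to be $q$-nef, which is not available; it follows from condition (2) of Definition \ref{d-mstable}: log canonicity of $(\sX^s,\sD^s+\sX^s_{b_0})$ gives $a(G,\sX^s,\sD^s)=a(G,\sX^s,\sD^s+\sX^s_{b_0})+v_G(\sX^s_{b_0})\ge -1+1=0$ for every divisor $G$ with center in the central fiber. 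Your reduction to normal $\sX$ via Proposition \ref{normal} and your equality analysis ($E=0$, hence crepant, hence identified with the canonical model) are fine and match the paper's use of ${\rm Proj}$ of the section ring.
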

\begin{proof}
Let $\mathcal{X}^g$ be the graph of the rational map $\mathcal{X}\dasharrow \mathcal{X}^s$, then we have morphisms
$$
\xymatrix{& \cX^g\ar@{>}[dl]_{p_1}  \ar@{>}[dr]^{p_2}& \\
 	  \cX & &\cX^s\ .}
$$
Thus we can write $$rp_2^*(K_{\mathcal{X}^s}+\cD^s)+E=p^*_1(\sL)$$
for some divisor $E$ supported on the  special fiber.  Now we  define a function
$$f(t)=(n+1) (rp_2^*(K_{\mathcal{X}^s}+\cD^s)+tE)^{n}\cdot p_2^*(K_{\cX^s/B}+\cD)-\frac{1}{r}n(rp_2^*(K_{\mathcal{X}^s}+\cD)+tE)^{n+1},$$
thus $f(0)=\DF((\mathcal{X}^s,\D^s)/B,r(K_{\mathcal{X}^s}+\D^s))$ and
	$$\DF((\mathcal{X},\cD)/B, \sL)-f(1)=(n+1) (p^*_2\sL)^{n}\cdot F,$$
where if we denote by $\cD^g\subset\cX^g$ the birational transform of $\cD^s$, then
$$F=K_{\sX^g}+\D^g-p_2^*(K_{\sX^s}+\cD^s).$$
Since  $(\cX^s,\cD^s+\cX^s_0)$ is log canonical, and for any prime divisor $G$ whose center  on $\cX^s$ is contained in $\cX^s_0$, the discrepancies satisfy that
$$-1\le a(G,\cX^s,\cD^s+\cX^s_0)=a(G,\cX^s,\cD^s)-v_G(\cX^s_0)\le a(G,\cX^s,\cD^s)-1, $$
we have $a(G,\cX^s,\cD^s)\ge 0$. Thus $F$ is an effective divisor supported on the fiber $\sX^g$ over $0$, from which  we conclude that
$$\DF((\mathcal{X},\cD)/B, \sL)\ge f(1).$$

On the other hand, $E$ being supported on  special fiber implies
\begin{eqnarray*}
\frac{df}{dt}(t_0)= -t_0 n(n+1) (rp^*_2 (K_{\mathcal{X}^s}+\D^s)+t_0E)^{n-1}\cdot E^2\ge  0,
\end{eqnarray*}
by Hodge index theorem. As $\cX^g$ is a graph, we know that
$$rp_2^* (K_{\cX^s}+\cD)+t_0E=(1-t_0)p^*_2(K_{\cX}+\cD)+t_0p^*_1(\sL)$$ is ample over $B$ for any $t_0\in (0,1)$. So $\frac{df}{dt}(t_0)>0$ if $E\ne 0$.
Thus we conclude that
$$\DF( (\cX^s,\cD^s)/B,\sL)=f(0) \leq f(1)\le \DF(( \cX,\cD)/B,\sL).$$ with the equality holds  only if $E=0$. When $E=0$,  $rp_2^*(K_{\cX^s}+\cD^s)=p_1^*\sL$ and hence
$$\cX^{s}={\rm Proj }R(\cX^g/B, rp_2^*(K_{\cX^s}+\cD^s))\cong {\rm Proj }R(\cX^g/B, p_1^*\sL) \cong \cX,$$
where the last equality follows from the assumption that  $\cX$ is normal.\end{proof}

\begin{proof}[Proof of Theorem \ref{t-main}]
By Corollary \ref{c-normal},   $(\cX^g, \sD^g, \sL)$ minimizes the DF invariants among all compactifications of $(\sX,\sD)\times_BB^{\circ}$ over $B$ where $B^{\circ}=B\setminus \{b_0 \}$, and $\cX^g$ is normal.    Therefore, Theorem \ref{t-minK} implies $(\sX^g,\D^g; \sL)$ is indeed a KSBA-stable family over $B$. By the uniqueness (cf. Theorem \ref{t-uni}), we must have
$$(\sX^g,\sD^g;\sL)\cong(\cX_B, \sD_B; \omega_{\sX_B}^{[r]}(r\sD_B))\ .$$
Hence the proof is completed.
\end{proof}

\section{Examples}\label{s-ex}
In this section we present some examples which we can use Theorem \ref{t-main2} to show that there does not exist an asymptotically log Chow semistable limit.
\subsection{Log curves}\label{ss-lc}
In this subsection, we will discuss the examples of log curves. We will use the calculation obtained in \cite{LW}.

Let $\pi :( \cX, \ba\cdot \bs)
\to B$ be a family of \emph{weighted pointed stable curve }(cf. Example \ref{weight-pt-curve} for definitions
)\ over $B$ with $l$ sections $\bs=\{ s_{i}\} _{i=1}^{l}$ of $\cX \to B,\mathbf{a\in } [0,1] ^{l}\cap \mathbb{Q}^{l}$,  that is,  for
each $b\in B,$ $( \cX_{b},\ba\cdot\bs|_b) $ is a weighted pointed stable curve.

By definition, such a family is a KSBA-stable family over $B$.
However, whereas a smooth weighted pointed stable curve is always asymptotically Chow semistable,  there are examples of nodal curves $( X,\ba\cdot\bx) $ is {\it not}  asymptotically Chow semistable for the embedding $X\subset \PP H^0(\omega^{\otimes r}_X(r\ba\cdot \bx))$ for $r\gg 1$.

Recall from  \cite[Proposition 1.6 ]{LW}, {\em a polarized weighted pointed nodal curve} $(X, \ba\cdot\bx; L)$ with $\deg\omega_X(\ba\cdot\bx)>0$  is Chow semistable for  $\deg_L X\geq N(\ba, \deg\omega_X(\ba\cdot\bx))$, a constant only depends on $\ba$ and $\deg\omega_X(\ba\cdot\bx)$, if and only if  for any subcurve (not necessarily connected ) $Y\subset X$ we have

\begin{equation}\label{bb}
\left|\Bigl(\deg_L Y+\sum_{x_{j}\in Y}\frac{a_{j}}{2}\Bigr)-\frac{(\deg\omega_X(\ba\cdot\bx)|_Y)}{\deg\omega_X(\ba\cdot\bx)}\Bigl( \deg_L X+\sum_{j=1}^n\frac{a_{j}}{2}\Bigr)
\right|
\leq \frac{\ell_Y}{2}
\end{equation}
where $\ell_Y$ is the intersection of $Y$ with its complement in $X$. In particular, for
{\em a weighted pointed stable curve }$(X, \ba\cdot\bx; L:=\omega_X^{\otimes r}(r\ba\cdot\bx))$ is asymptotic Chow stable if and only if  for any subcurve $Y\subset X$ we have

\begin{eqnarray}\label{basic}
&&\left|\Bigl(\deg_L Y+\sum_{x_{j}\in Y}\frac{a_{j}}{2}\Bigr)-\frac{\deg(\omega_X(\ba\cdot\bx)|_Y)}{\deg\omega_X(\ba\cdot\bx)}\Bigl( \deg_L X+\sum_{j=1}^n\frac{a_{j}}{2}\Bigr)
\right|\nonumber \\
&=&\frac{1}{2}\left|\deg(\omega_X|_Y) -\frac{\deg(\omega_X(\ba\cdot \bx)|_Y)}{\deg\omega_X(\ba\cdot \bx)} \deg \omega_X
\right|\leq \frac{\ell_Y}{2}\ .
\end{eqnarray}
However, it is easy to see from \eqref{basic} that  if there is a subcurve satisfying  $\deg (\omega_X|_Y)>\ell_Y$, then a sufficiently large total weight of the  points on the complement of $Y$ will prevent $X$  being asymptotically Chow stable for the polarization $\omega_X(\ba\cdot\bx)$.

\begin{exmp}\label{P-H}
Let $X$ be a one point union of  two smooth curves $X_i,\ i=1,2$ with genus $g_1>1,g_2\geq 1$ such that $\bx$ supported on $X_2\setminus\{X_1\cap X_2\}$. Then we have $\deg(\omega_X|_{X_i})=2g_i-1, i=1,2$ and $\ell_{X_1}=1$, from which we deduce that the following inequality
\begin{eqnarray*}
&&\ell_{X_1}<\deg(\omega_X|_{X_1}) -\frac{\deg(\omega_X(\ba\cdot\bx)|_{X_1})}{\deg\omega_X(\ba\cdot\bx)} \deg \omega_X
=(2g_1-1)\left(1-\frac{g_1+g_2-1}{g_1+g_2-1+\sum_{i=1}^n a_i/2} \right)\ \\
\end{eqnarray*}
is equivalent to
\begin{equation}\label{t-wt}
\sum_ia_i>\frac{g_1+g_2-1}{2(g_1-1)}=\frac{\deg\omega_X}{2\deg\omega_{X_1}}\ .
\end{equation}
So as long as the the total weight satisfies \eqref{t-wt}, inequality \eqref{basic} will be violated and
$(X, \ba\cdot\bx; \omega_X^{\otimes r}(r\ba\cdot\bx))$ is Chow unstable for $r\gg1$.
 \end{exmp}

If we choose $( X,\ba\cdot\bx)$ to be one of the above example and put it in a family $\pi :( \cX, \ba\cdot \bs)
\to B$, such that the general fibers are smooth weighted pointed stable curves and the special fiber over $b_0$ is  $( X,\ba\cdot\bx)$, then it is a
KSBA-stable family.  However, there is no  $L$ on $X$ so that  $(X, \ba\cdot\bx; L^{\otimes r})$ is Chow semistable for $r\gg1$, since were there such a line bundle, by  letting $r\to \infty$ and applying \eqref{bb} we immediately see  this is only possible when $L$  is proportional to $\omega_X(\ba\cdot \bx)$, contradicting to the fact  that $( X,\ba\cdot\bx) $ is chosen from Example \ref{P-H}.

On the other hand, it was also proved in \cite{LW} that we can always find $\{b_i\}_i\subset \ZZ$  independent of $r$ such that polarized weighted pointed nodal curve
$$( X,\ba\cdot\bx; \omega^{\otimes r}_X( r\ba\cdot \bx) \otimes \sO_X( \Sigma_i b_{i}X_i)) $$
is asymptotically Chow semi-stable for $r\gg1$,  where $X_i$'s
are irreducible components of $X$. So for the log curve case, the pathology of nonexistence of asymptotic GIT filling was remedied  by vary the polarization while keeping  the  underlying curve $X$  unchanged. This in particular allows us to identify the GIT coarse moduli of  weighted pointed stable curve  with the one constructed by Hassett \cite{Hass}. Also see \cite[8.1]{Sw} for another GIT approach by varying the linearization weight (instead of $\ba$) on the marked points.

\subsection{Surfaces}\label{ss-sur}
Now we consider canonically polarized surfaces without boundary.
Given a family of canonically polarized smooth surfaces $\sX^{\circ}$ over $B\setminus \{b_0 \}$. They are asymptotically Chow stable (see \cite{G}). After possibly a base change  $B$, we assume  we can compactify $\sX^{\circ}$ to get a KSBA-stable family $\sX$ over $B$. In this case we know $\omega^{[r]}_{\sX}|_{\sX_{b_0}}=\omega^{[r]}_{\sX_{b_0}}$ for any $r\ge 0$, i.e., it satisfies Koll\'ar condition (see \cite{Hack}).

Let us assume  $p\in \sX_{b_0}$ is a normal point, which is a terminal point on the total family. Then it has surface singularities of class T, i.e.,   cyclic quotient singularities of type
$$\frac{1}{n^2}(1,an-1) \mbox{ with }(a,n)=1.$$
(see \cite{KSB}).

We recall that the multiplicity of a general rational singularity is $-Z^2$ where $Z$ is the fundamental cycle (cf. \cite[Corollary 6]{Ar}).
When the singularity is a cyclic quotient, the fundamental cycle $Z$  is the sum of all exceptional curves in the minimal resolution with multiplicity 1.
In general, the multiplicity of a T-singularity could be arbitrarily large.
On the other hand,  any weakly asymptotically Chow semistable polarized variety  $(X, L)$ should satisfy
${\rm mult}_x(X ) \le (\dim X + 1)! $
for any closed point $x \in X$ (see \cite[Section 3]{Mum}). Out of this, we can construction many examples. We will exhibit   two types of explicit examples in the following paragraph.

\bigskip

\noindent {\it Hypersurfaces (after Koll\'ar):} For $m> 30$, consider
$$\mathcal{X}=(w^{m-6}(xyz^4+y^6)+w^{m-10}z^{10}+t^{30}w^m+x^m+y^m+z^m=0)\in {\mathbb P}(x,y,z,w)\times \mathbb{C}[t].$$
For a general $t$, $\cX_t$ is smooth. However, $\mathcal{X}_0$ has a singularity at $(0,0,0,1)$, which is indeed not log canonical.
We take a partial resolution by performing a weighted blow up  $q:\mathcal{Z}\to \mathbb{P}^3\times \mathbb{A}^1$ at $P=(0,0,0,1)\times 0$ with weight $(1,5,6,1)$.
Let $p:\mathcal{Y}\to \cX$ be the birational transform. Then $\mathcal{Y}_0$ has two components.

We claim $\mathcal{Y}_0$ is semi-log-canonical. Locally over $P$, $\mathcal{Z}$ has a quotient singularity $Q$ of type $\frac{1}{6}(1,5,-1,1)$. Here we use $\frac{1}{r}(a_1,...,a_n)$ to denote the quotient singularity  $\mathbb{C}^n/(\ZZ/r\ZZ)$ by the action $(x_1,...,x_n)\to (\mu^{a_1}x_1,...,\mu^{a_n}x_n)$ where $\mu$ is an $n^{{\rm th}}$-primitive root of unit.  Since the changing of the variables is given by the formula
$$x=x_1z_1, y=y_1z_1^5, z=z_1^6, t=t_1z_1,$$
 locally at  $Q$, $\mathcal{Y}$ is given by
$$(x_1y_1+y_1^6+z_1^{30}+t_1^{30}+(\mbox{higher orders})=0)\subset \frac{1}{6}(1,5,-1,-1). $$
The exceptional component $E$ of $\mathcal{Y}_0$ is given by $(z_1=0)$, which is indeed normal. Denote by $(K_{\mathcal{Y}_0}+E)|_{E}=K_E+C$, then around $Q$,
$$C\subset E\subset \mathbb{P}(1,5,-1,1)=F$$
is given by the equation
$$(x_1y_1+y_1^6+(\mbox{higher orders})=0)\subset (x_1y_1+y_1^6+t_1^{30}+(\mbox{higher orders})=0)\subset \frac{1}{6}(1,5,1). $$
By inversion of adjunction, we know that $\mathcal{Y}_0$ is semi-log-canonical at $Q$. Hence $\mathcal{Y}_0$ is semi-log-canonical.

It follows from
$$K_{\mathcal{Z}}=q^*K_{\mathbb{P}^3\times \mathbb{A}_1}+12 F\qquad \mbox{and} \qquad q^*(\sX)=\mathcal{Y}+30 F$$
that $(K_Z+\mathcal{Y})|_{\mathcal{Y}}=K_\mathcal{Y}=p^*\mathcal O(m-4)-18 E.$
For any number $a>1$, $q^*\mathcal{O}(a)-F$ is ample on $\mathcal{Z}$, which implies  $K_{\mathcal{Y}}$ is ample  since $m=\deg(\mathcal{X}_t)>30>22$.
Thus $\mathcal{Y}\to \mathbb{C}[t]$ is a KSBA-stable family.

However, $Q\in E$ is a T-singularity of type
$$\frac{1}{an^2}(1,an-1)=\frac{1}{180}(1,29).$$
Its resolution is given by $(7,2,2,2,3,2,2,2,2)$, where $(b_1,b_2,...,b_n)$ denotes the dual chain of the exceptional curves with self-intersection numbers $-b_1$,..., $-b_n$. So its multiplicity is 8. Thus
${\rm mult}_Q\mathcal{Y}_0> {\rm mult}_QE=8,$ which implies $\mathcal{Y}_0$ is not asymptotically Chow semistable by Mumford's theorem.

\bigskip
\noindent{\it Surfaces with small topological invariants: } There are more complicated examples coming from the degeneration of canonically polarized surfaces with small topological invariants. In fact, in \cite{LP}, the authors  constructed a rational surface $X$ with 5 singularities and ample canonical class, which can be smoothed to a family of canonically polarized surfaces  $X_b$ with $p_g=0$ and $K^2_{X_b}=2$. The minimal resolutions of the five singularities  are given by the chains of $\PP^1$ of type
$$(4), (2,5), (2,7,2,2,3), (7,2,2,2), (2,10,2,2,2,2,2,3).$$
 These singularities have multiplicities  4, 5, 8, 7 and 11. The last three violate Mumford's inequality.

See \cite[Section 7]{LN} for more examples in this manner.

\bigskip

Department of Mathematics and Computer Sciences, Rutgers University, Newark, NJ 07102, USA

{\sl E-mail address: xiaowwan@rutgers.edu}

Beijing International Center of Mathematics Research, 5 Yiheyuan
Road, Haidian District, Beijing, 100871, China

{\sl E-mail address: cyxu@math.pku.edu.cn}

Department of Mathematics, University of Utah, 155 South 1400 East, JWB 233, Salt Lake City, UT 84112, USA

{\sl E-mail address: cyxu@math.utah.edu}

\end{document}